\newcommand{\R}{\mathbb{R}}
\newcommand{\C}{\mathbb{C}}
\renewcommand{\S}{\mathbb{S}}
\renewcommand{\phi}{\varphi}
\newcommand{\dd}{\mathrm{d}}
\renewcommand{\sc}{\mathrm{sc}}
\renewcommand{\Re}{\mathrm{Re}\,}
\renewcommand{\Im}{\mathrm{Im}\,}
\newtheorem{defi}{Definition}
\newtheorem{theorem}[defi]{Theorem}
\newtheorem{assumption}[defi]{Assumption}
\newtheorem{remark}[defi]{Remark}
\begin{document}

\title{On the numerical solution to   an   inverse medium     problem}
\author{Dinh-Liem Nguyen\thanks{Department of Mathematics, Kansas State University, Manhattan, KS 66506, USA; (\texttt{dlnguyen@ksu.edu}, \texttt{trungt@ksu.edu})}
\and Trung Truong\footnotemark[1]
 }

\date{}
\maketitle

\begin{center}
\vspace{-0.6cm}
\textit{Dedicated to Professor Duong Minh Duc on the occasion of his 70th birthday.}
\end{center}

\begin{abstract}
This paper is concerned with  the inverse medium problem of 
determining the location and  shape of penetrable scattering objects from measurements of
the scattered field. We study a sampling indicator function for recovering the scattering object in a fast and robust way.
  A  flexibility of this indicator function  is  that it is applicable to    data measured in  near-field regime or far-field regime. 
The implementation of the function is simple and does not involve solving any ill-posed problems.
 The resolution analysis and stability estimate   of the indicator function are investigated using   the 
factorization analysis  of the far-field operator along with the Funk-Hecke formula.   
The performance of the method is verified on both simulated and experimental data. 
\end{abstract}

\sloppy

{\bf Keywords. }
 sampling indicator function, inverse medium scattering, near-field data, Cauchy data, sampling method

\bigskip

{\bf AMS subject classification. }
 35R30,  35R09, 65R20

\section{Introduction}
We consider the inverse medium scattering problem for the Helmholtz equation in $\R^n$ ($n= 2$ or 3). 
This inverse problem can be considered as a model problem for the inverse scattering of time-harmonic acoustic waves or time-harmonic TE-polarized electromagnetic waves from bounded inhomogeneous media. It has been one of the central problems in inverse scattering theory and has a wide range of applications including nondestructive testing, radar imaging, medical imaging, and geophysical exploration~\cite{Colto2013}. Needless to say, there has been a large body of literature  on both theoretical and numerical studies on this inverse problem, see~\cite{Colto2000c, Colto2013} and references therein. 

In the present paper, we are  
interested in determining the location and shape of scattering objects from (near-field or far-field) multi-static data of the scattered field.  Since
we study sampling methods  to numerically solve this inverse problem, we will mainly discuss  related results in this direction. The Linear Sampling Method (LSM) can be considered as the first sampling method developed to solve the inverse problem under consideration~\cite{Colto1996}. The LSM aims to  construct an indicator  function for  unknown scattering objects. This indicator function is  evaluated on    sampling points obtained by discretizing   some domain in which the unknown target is searched for. 
The evaluation of the indicator function is typically fast, non-iterative and its construction  does not require advanced a priori information about the unknown target. These are also the  main advantages of the LSM over nonlinear optimization-based methods in solving inverse scattering problems. Shortly after the finding of the LSM, other sampling type  methods 
for  inverse  problems including the point source method~\cite{Potth1996}, the Factorization method (FM)~\cite{Kirsc1998}, the probe method~\cite{Ikeha1998b}  were also developed. We refer to~\cite{Potth2006} for a  discussion on sampling and probe methods studied until 2006. These  methods have been later extended to solve various inverse problems, see~\cite{Potth2006, Kirsc2008,   Cakon2011} and references therein. 

Our work in this paper is inspired by a class of sampling methods that have been studied more recently. We are particularly 
interested in the orthogonality sampling method (OSM)  proposed in~\cite{Potth2010}. While inheriting the advantages of the classical sampling methods mentioned above, the OSM is particularly attractive thanks to its simplicity and efficiency. For instance, the implementation of the OSM  only involves an evaluation of an inner product or some double integral (no need to solve an ill-posed problems). The method  is  extremely robust with respect to noise in the data and its stability can be easily justified. However, the theoretical analysis of the OSM is far less developed compared with that of the classical sampling methods, especially the FM and LSM.  We also refer to~\cite{Liu2017, Harri2022, Ito2012, Ito2013, Harri2019, Kang2018, Park2018} for studies  on direct sampling methods (DSM) which are closely related to the OSM. 
 
Most of the published results on the OSM and DSM deal with the case of far-field data, see, e.g., ~\cite{Potth2010,Gries2011, Ito2012, Liu2017, Harri2019} for results on  the scalar Helmholtz equation  and~\cite{Ito2013, Nguye2019, Harri2020, Le2022} for results on the Maxwell's equations. 
There have been only a few results on the OSM and DSM concerning the case of near-field data. The near-field OSM studied in~\cite{Akinc2016} is only  applicable to the 2D case with circular measurement boundaries. The 3D  case was studied in~\cite{Kang2021} under the small volume hypothesis of well-separated inhomogeneities. A flexibility of the  sampling indicator function   studied in this paper is that it works for near-field data  (and also far-field data) and is not limited to small scatterers or 2D circular measurement boundaries. However, the method requires  Cauchy data instead of  only scattered field data in the  near-field regime.

%

We analyze the sampling indicator function using the factorization analysis of the far-field operator   and the Funk-Hecke formula. The idea is to relate the indicator function to $\|F \phi_z\|^2$ where $F$ is the far-field operator and $\phi_z$ is some special test function. Then the resolution analysis is investigated  using a factorization of $F$,  analytical properties of the operators in the factorization and the Funk-Hecke formula. To our knowledge, 
the idea of combining the factorization analysis and the Funk-Hecke formula to analyze   sampling indicator functions was initially introduced in~\cite{Vansk2008}. 


The paper is organized as follows. We will formulate the inverse medium problem of interest and the factorization analysis  in Section~\ref{sec2}. The analysis and stability  of the  sampling indicator function  is presented in Section~\ref{sec3}. Section~\ref{sec5} is dedicated to a numerical study of the sampling method.

\label{intro}

\section{The inverse medium problem and  the factorization analysis }
\label{sec2}
In this section we formulate the inverse problem  of interest  and review some necessary ingredients of the factorization analysis.
This factorization analysis was initially studied for the classical factorization method by Kirsch~\cite{Kirsc1998}. We  refer to~\cite{Kirsc2008} 
for more results about the factorization method. 
Consider a penetrable inhomogeneous medium that 
occupies a bounded Lipschitz domain $D \subset \mathbb{R}^{n}$ ($n = 2$ or 3).  Assume that 
this medium is 
characterized by the bounded function $\eta(y)$ and that $\eta = 0$
in $\R^n \setminus \overline{D}$. Consider the incident plane wave
$$
u_{\mathrm{in}}(x,d) = e^{ikx\cdot d}, \quad  x \in \R^n, \quad d \in  \mathbb{S}^{n-1}: = \{x \in \R^n: |x| = 1\},
$$
where $k>0$ is the wave number and $d$ is the direction vector of propagation.    We consider the following model problem for the scattering of  $u_{\mathrm{in}}(x,d)$ by the  inhomogeneous medium
\begin{align}
\label{Helm}
& \Delta u + k^{2}(1 + \eta(x)) u =0,\quad  x \in \mathbb{R%
}^{n},  \\
& u = u_\sc + u_\mathrm{in}, \\
 \label{radiation}
& \lim_{r\rightarrow \infty }r^{\frac{n-1}{2}}\left( \frac{\partial u_{\mathrm{sc}}}{
\partial r}-iku_{\mathrm{sc}}\right) =0,\quad r=|x|,
\end{align}
where $u(x,d)$ is the total field, $u_\sc(x,d)$ is the scattered field, and the Sommerfeld radiation condition~\eqref{radiation} holds
uniformly for all  directions $x/|x| \in \S^{n-1}$. 
If $\mathbb{R}^n \setminus \overline{D}$ is connected and $\Im(\eta) \geq 0$,  this scattering problem is known to have a unique weak solution $u_\sc \in H^1_{\mathrm{loc}}(\mathbb{R}^n)$, see~\cite{Colto2013}. 

\textbf{Inverse problem.} Consider a Lipschitz domain $\Omega \subset \R^n$ such that $D \subset \Omega$
and denote by $\nu(x)$ the outward normal unit vector to $\partial \Omega$ at $x$. 
We aim to determine $D$ from  $u_\sc(x,d)$  and $ \partial u_\sc(x,d)/ \nu(x)$ for almost all $(x,d) \in \partial \Omega \times \S^{n-1}$.

We denote by $\Phi(x,y)$ the free-space Green's function of the scattering problem~\eqref{Helm}--\eqref{radiation}. It is well known that 
\begin{equation} 
\label{green}
 \Phi(x,y)= 
\begin{cases}
\frac{i}{4}H^{(1)}_0(k|x-y|), & \text{in } \R^2 , \\ 
\frac{e^{ik|x-y|}}{4\pi|x-y|}, & \text{in } \R^3.
\end{cases}
\end{equation}
 It is also well known that  problem~\eqref{Helm}--\eqref{radiation} is equivalent to the Lippmann-Schwinger  integral equation
\begin{align}
\label{LS}
u_\sc(x) = k^2 \int_D \Phi(x,y) \eta(y) u(y)\dd y,
\end{align}
and that the scattered field
has the asymptotic behavior
$$
u_\sc(x,d) = \frac{e^{ik|x|}}{|x|^{(n-1)/2}}\left(u^\infty(\hat{x},d) + O\left(\frac{1}{|x|}\right)\right), \quad |x| \to \infty,
$$
for all $(\hat{x},d) \in \S^{n-1}\times \S^{n-1}$. The function $u^\infty(\hat{x},d)$ is called the scattering amplitude or the far-field pattern of the scattered field $u_\sc$. 
Let $F:L^2(\S^{n-1}) \to L^2(\S^{n-1})$ be the far-field operator defined by 
$$
Fg(\hat{x}) = \int_{\S^{n-1}} u^\infty(\hat{x},d) g(d) \dd s(d).
$$
Thanks to the well-posedness of the scattering problem~\eqref{Helm}--\eqref{radiation} we can define the solution operator $G:L^2(D) \to L^2(\S^{n-1})$ as
\begin{align}
\label{solution}
Gf = w^\infty,
\end{align}
where $w^\infty$ is the scattering amplitude of the unique solution $w$ to
\begin{align}
\label{helm2}
& \Delta w + k^{2}(1 + \eta) w =-k^2\eta f,\quad  \text{in } \mathbb{R}^{n},  \\
\label{rad}
& \lim_{r\rightarrow \infty }r^{\frac{n-1}{2}}\left( \frac{\partial w}{
\partial r}-ikw \right) =0,\quad r=|x|.
\end{align}
Note that this problem is just problem~\eqref{Helm}--\eqref{radiation} rewritten for the scattered field 
with incident field $u_\mathrm{in}$ replaced by $f$. By linearity of    problem~\eqref{Helm}--\eqref{radiation}, 
$Fg$ is just the scattering amplitude of solution $w$ to problem~\eqref{helm2}--\eqref{rad} with $f = v_g$, defined by
$$
v_g(x) =   \int_{\S^{n-1}} e^{ikx \cdot d} g(d) \dd s(d), \quad g \in L^2(\S^{n-1}),  \quad x\in \R^n.
$$
Now we define the compact operator $H: L^2(\S^{n-1}) \to L^2(D)$ as $
Hg = v_g|_D.$
Then obviously the far-field operator can be factorized as
$$
F = GH.
$$
Let $H^*: L^2(\S^{n-1}) \to L^2(\S^{n-1})$ be the adjoint of $H$ given by
$$
H^*g(\hat{x}) =  \int_{\S^{n-1}} e^{-ik\hat{x} \cdot y} g(y) \dd s(y),
$$
and we define $T: L^2(D) \to L^2(D)$ as
\begin{align}
\label{T}
Tf = k^2\eta(f + w),
\end{align}
where $w$ solves problem~\eqref{helm2}--\eqref{rad}. Since $w$ solves the Lippmann-Schwinger equation
$
w(x) = k^2 \int_D  \Phi(x,y) \eta(y)(w(y) + f(y))\dd y,
$
we can deduce from scattering amplitude of $w$  that (see~\cite{Cakon2016})
$$
G = H^*T, \quad F = H^*TH.
$$
To proceed further with the analysis of the Factorization method we need to briefly discuss the  interior  transmission eigenvalues.
We call $k>0$ an interior transmission eigenvalue if the  problem 
\begin{align*}
&\Delta u + k^2 (1+\eta)u = 0, \quad \text{in } D \\
&  \Delta v + k^2 v = 0, \quad \text{in } D \\
&u = v, \quad \frac{\partial u}{\partial \nu} = \frac{\partial v}{\partial \nu}, \quad \text{on } \partial D
\end{align*}
has a nontrivial  solution 
$(u, v) \in L^2(D) \times L^2(D)$ such that $u - v \in H^2(D)$.

We refer to \cite{Cakon2016} and the references therein for more details about  transmission eigenvalues. For the next results, we assume  that $k$ is not an interior transmission eigenvalue.  
The following assumption is also important for the factorization  analysis.
\begin{assumption}
\label{assume1}
We assume that $\eta \in L^\infty(\R^n)$, $\Im(\eta) \geq 0$ and that there
exists a constant $c > 0$ such that $\Re(\eta(x)) + \Im(\eta(x))\geq c$ for almost all $x \in D$.
\end{assumption}
The following theorem of the factorization analysis is important to  the sampling method studied in the next section, see~\cite{Audib2014} for a proof of the theorem.
\begin{theorem}\label{fm-results}
 If Assumption~\ref{assume1} holds true, then operator $T$
defined in~\eqref{T} satisfies the coercivity property. That means there exists a constant $\gamma>0$
such that
$$ 
|\langle Tf, f \rangle| \geq \gamma \|f\|^2, \quad \text{for all } f\in \mathrm{Range}(H).
$$
\end{theorem}

\section{A sampling indicator  function  }
\label{sec3}
In this section we introduce the sampling indicator  function and analyze its properties. 
We   define the indicator  function  $I(z)$ as
\begin{equation}
\label{I1}
I(z) := \int_{\S^{n-1}}\left| \int_{\S^{n-1}} \int_{\partial \Omega} u_\sc(y,d)  \frac{\partial \Phi^\infty(\hat{x},y)}{\partial \nu(y)} - \frac{\partial u_\sc(y,d)}{\partial \nu(y)}   \Phi^\infty(\hat{x},y)\dd s(y) \phi_z(d)\dd s(d) \right|^2 \dd s(\hat{x})
\end{equation}
where $\varphi_z$ is given by
\begin{align}
\label{phi}
\phi_z(d) = e^{-ikd\cdot z}, \quad d \in \S^{n-1}, \quad z \in \R^n,
\end{align}
and $\Phi^\infty(\hat{x},y)$ is the scattering amplitude of the Green's function $\Phi(x,y)$, given by
\begin{equation*} 
\label{}
 \Phi^\infty(\hat{x},y)= 
\begin{cases}
\frac{e^{i\pi/4}}{\sqrt{8\pi k}}e^{-ik\hat{x}\cdot y}, & \text{in } \R^2 , \\ 
\frac{1}{4\pi}e^{-ik\hat{x}\cdot y}, & \text{in } \R^3.
\end{cases}
\end{equation*}
Recall that $J_0$ and $j_0$ are respectively a Bessel function and a spherical Bessel function of the first kind.
The behavior of  $I(z)$ is analyzed in the following theorem.
\begin{theorem}
Assume that $k$ is not an interior transmission eigenvalue and that Assumption~\ref{assume1} holds true. Then
the indicator  function $I(z)$  satisfies
\begin{align}
\label{bound}
&0< \frac{\gamma^2}{ |{\S^{n-1}}|} \left(\int_D |\alpha_z(x)|^2 \dd x\right)^2 \leq I(z) \leq \|G\|^2  \int_D |\alpha_z(x)|^2 \dd x, \quad z\in \R^n,
\end{align}
where $\gamma$ is the positive constant in the coercivity of operator $T$ in Theorem~\ref{fm-results}, $G$ is the solution operator defined in~\eqref{solution}, and
\begin{equation*} 
\alpha_z(x) =
\begin{cases}
2\pi J_0(k|z-x|), & \text{in } \R^2 , \\ 
4\pi j_0(k|z-x|), & \text{in } \R^3.
\end{cases}
\end{equation*}
Furthermore 
\begin{align}
\label{decayrate}
I(z) = O\left(\frac{1}{\mathrm{dist}(z,D)^{n-1}} \right)\quad  \text{ as } \mathrm{dist}(z,D) \to \infty,
\end{align}
where $\mathrm{dist}(z,D)$ is the distance from $z$ to $D$. 
\end{theorem}
\begin{remark}
From the behavior of the Bessel functions $J_0$ and $j_0$ we know that  $|\alpha_z(x)|^2$ peaks as sampling point $z$ approaches point $x$ in the scatterer $D$ and that  $|\alpha_z(x)|^2$
decays as $z$ is away from $x$ with the decay rate~\eqref{decayrate}.  We thus expect from the upper bound in~\eqref{bound}  that $I(z)$ takes small values as $z$ is outside $D$.  
From the lower bound in~\eqref{bound},  $I(z)$ is bounded by a positive constant as $z$ is inside $D$. This is  not a rigorous justification for the behavior of $I(z)$. 
Such a justification  is still an open problem. 

\end{remark}
\begin{proof}
From the Helmholtz integral representation for $u_\sc$ (see~\cite{Colto2013}) we have
$$
u_\sc(x,d) = \int_{\partial \Omega} u_\sc(y,d)   \frac{\partial \Phi(x,y)}{\partial \nu(y)} -\frac{\partial u_\sc(y,d)}{\partial \nu(y)}  \Phi(x,y)\dd s(y).
$$
This deduces  that 
$$
u^\infty(\hat{x},d) =\int_{\partial \Omega} u_\sc(y,d)  \frac{\partial \Phi^\infty(\hat{x},y)}{\partial \nu(y)} - \frac{\partial u_\sc(y,d)}{\partial \nu(y)}   \Phi^\infty(\hat{x},y)\dd s(y).
$$
Then substituting this formula of $u^\infty$ in the far-field operator $F$ implies that
$$
F\phi_z(\hat{x}) =  \int_{\S^{n-1}} \int_{\partial \Omega} u_\sc(y,d)  \frac{\partial \Phi^\infty(\hat{x},y)}{\partial \nu(y)} - \frac{\partial u_\sc(y,d)}{\partial \nu(y)}   \Phi^\infty(\hat{x},y) \dd s(y) \phi_z(d) \dd s(d).
$$
Therefore we derive from the definition of $I(z)$ that
$$
I(z) = \int_{\S^{n-1}} |F\phi_z(\hat{x}) |^2 ds(\hat{x}) = \|F\phi_z\|^2.
$$
Since $\|\phi_z\|^2 = \int_{\S^{n-1}}|e^{-ikz\cdot d}|^2 \dd s(d) = |{\S^{n-1}}|$ (the surface area of ${\S^{n-1}}$), using the Cauchy-Schwarz inequality 
and the factorization of the far-field operator $F$ we obtain
$$
\sqrt{ |{\S^{n-1}}|} \|F\phi_z\| \geq \langle F\phi_z, \phi_z \rangle  = \langle H^*TH\phi_z, \phi_z \rangle = \langle TH\phi_z, H\phi_z\rangle.
$$
Using the coercivity of $T$ in Theorem~\ref{fm-results} and  $\|F\phi_z\| \leq \|G\|\|H\phi_z\|$ implies that
$$
\frac{\gamma^2}{ |{\S^{n-1}}|} \|H\phi_z\|^4 \leq I(z) \leq \|G\|^2  \|H\phi_z\|^2,
$$
where $\gamma$ is the constant from the coercivity  of $T$ in Theorem~\ref{fm-results}. 

Now  using the Funk-Hecke formula (see~\cite{Colto2013}) we obtain
\begin{equation} 
H\phi_z(x) =  \int_{\S^{n-1}} e^{-ik(z-x) \cdot d}  \dd s(d) =
\begin{cases}
2\pi J_0(k|z-x|), & \text{in } \R^2 , \\ 
4\pi j_0(k|z-x|), & \text{in } \R^3,
\end{cases}
\end{equation}
which allows us to establish the estimate in~\eqref{bound}. The strict positivity of the lower bound in 
the estimate can be deduced from the fact that the operator $H$ is an injective operator, see~\cite{Audib2014}. 
Finally, using the asymptotic behavior of $J_0(r) = O(1/\sqrt{r})$ and $j_0(r) = O(1/r)$ as $r \to \infty$ we obtain that
$$
\|H\phi_z\|^2 
 = O\left(\frac{1}{\mathrm{dist}(z,D)^{n-1}} \right), \quad \text{as }\mathrm{dist}(z,D) \to \infty.
$$
This completes the proof.
\end{proof}

In   practice  the data are always perturbed with some noise. We assume the noisy data $u^\delta_{\sc}$ and $\partial u^\delta_{\sc}/\partial \nu$ satisfy    
\begin{align}
\label{noise1}
\| u_\sc - u^\delta_{\sc} \|_{L^2(\partial \Omega\times \S^{n-1})}  \leq \delta_1 \| u_\sc \|_{L^2(\partial \Omega\times \S^{n-1})}, \\
\label{noise2}
 \left\| \frac{\partial u_\sc}{\partial \nu} - \frac{\partial u^\delta_{\sc}}{\partial \nu} \right \|_{L^2(\partial \Omega\times \S^{n-1})}  \leq \delta_2 \left\|  \frac{\partial u_{\sc}}{\partial \nu} \right \|_{L^2(\partial \Omega\times \S^{n-1})} ,
\end{align}
for some positive constants $\delta_1, \delta_2 $. We now prove a stability estimate for the indicator  function  $I(z)$.

\begin{theorem}
\label{stability} 
Denote   by $I^{\delta}(z)$ the indicator function corresponding to noisy data $ u^\delta_{\sc}$ and $\partial u^\delta_{\sc}/\partial \nu$. Then
\begin{align*}
|I(z) - I^{\delta}(z)| \leq  \mathcal{C} \left( 2\max(\delta_1,\delta_2) + \max(\delta_1^2,\delta_2^2) \right), 
\quad \text{for all } z \in \R^3,
\end{align*}
where 
$$
\mathcal{C} =  |\S^{n-1}|^2 \left(  \| \Phi^\infty \|^2_{L^2( \S^{n-1} \times \partial  \Omega)} +  \left\|  \frac{\partial  \Phi^\infty }{\partial \nu} \right \|^2_{L^2( \S^{n-1} \times\partial  \Omega)} \right) \left(  \| u_\sc \|^2_{L^2(\partial \Omega\times \S^{n-1})} +  \left\|  \frac{\partial u_{\sc}}{\partial \nu} \right \|^2_{L^2(\partial \Omega\times \S^{n-1})} \right).
$$
\end{theorem}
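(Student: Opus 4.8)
The plan is to reuse the structure uncovered in the proof of the preceding theorem. Write $A_z(\hat x)$ for the double integral over $\S^{n-1}\times\partial\Omega$ appearing inside $|\cdot|^2$ in the definition~\eqref{I1} of $I(z)$, so that $I(z)=\|A_z\|_{L^2(\S^{n-1})}^2$, and let $A_z^\delta$ be the same quantity formed from the noisy data $u_\sc^\delta$ and $\partial u_\sc^\delta/\partial\nu$, so that $I^\delta(z)=\|A_z^\delta\|_{L^2(\S^{n-1})}^2$. The key observation is that $(u,\partial u/\partial\nu)\mapsto A_z$ is linear, hence $A_z-A_z^\delta$ is the same double integral evaluated on the data errors $u_\sc-u_\sc^\delta$ and $\partial u_\sc/\partial\nu-\partial u_\sc^\delta/\partial\nu$.

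First I would record two norm bounds obtained purely from the Cauchy--Schwarz inequality. Since $|\phi_z(d)|=1$, applying Cauchy--Schwarz first in $d\in\S^{n-1}$, then (pairing the two summands in $\R^2$) in $y\in\partial\Omega$, and finally integrating $|A_z(\hat x)|^2$ over $\hat x\in\S^{n-1}$, one gets a bound of the form
\[
\|A_z\|_{L^2(\S^{n-1})}^2 \le \mathcal{C},
\]
with $\mathcal{C}$ the constant in the statement (the product of the two bracketed sums of squared $L^2(\partial\Omega\times\S^{n-1})$-norms, times a power of $|\S^{n-1}|$). The identical computation applied to $A_z-A_z^\delta$, followed by inserting the relative noise bounds~\eqref{noise1}--\eqref{noise2} into the sum of the two squared data-error norms, gives
\[
\|A_z-A_z^\delta\|_{L^2(\S^{n-1})}^2 \le \mathcal{C}\,\max(\delta_1^2,\delta_2^2);
\]
here it is important to estimate $\|u_\sc-u_\sc^\delta\|^2+\|\partial u_\sc/\partial\nu-\partial u_\sc^\delta/\partial\nu\|^2\le\max(\delta_1^2,\delta_2^2)\big(\|u_\sc\|^2+\|\partial u_\sc/\partial\nu\|^2\big)$, so that the single factor $\max(\delta_1^2,\delta_2^2)$ (rather than $\delta_1^2+\delta_2^2$) appears. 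By the triangle inequality these two estimates also yield $\|A_z^\delta\|_{L^2(\S^{n-1})}\le\sqrt{\mathcal{C}}\,\big(1+\max(\delta_1,\delta_2)\big)$.

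It remains to combine them. Using
\[
\|A_z\|^2-\|A_z^\delta\|^2=\big(\|A_z\|-\|A_z^\delta\|\big)\big(\|A_z\|+\|A_z^\delta\|\big)\le\|A_z-A_z^\delta\|\,\big(\|A_z\|+\|A_z^\delta\|\big),
\]
I would substitute the three bounds above to obtain
\[
I(z)-I^\delta(z)\le\sqrt{\mathcal{C}}\,\max(\delta_1,\delta_2)\cdot\sqrt{\mathcal{C}}\,\big(2+\max(\delta_1,\delta_2)\big)=\mathcal{C}\big(2\max(\delta_1,\delta_2)+\max(\delta_1^2,\delta_2^2)\big),
\]
using $\max(\delta_1,\delta_2)^2=\max(\delta_1^2,\delta_2^2)$, which is the claim.

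I do not expect a genuine obstacle: this is a routine perturbation argument. The points requiring attention are (a) the bookkeeping in the Cauchy--Schwarz steps, which must be organised so that the $\Phi^\infty$-factors and the data-factors separate exactly as in the definition of $\mathcal{C}$ and the stated power of $|\S^{n-1}|$ is produced; (b) applying~\eqref{noise1}--\eqref{noise2} to the \emph{sum} of squared errors, as noted above; and (c) the fact that the estimate is one-directional — when $\|A_z^\delta\|\ge\|A_z\|$ the difference $I(z)-I^\delta(z)$ is nonpositive and the inequality is trivial — which is why only an upper bound on $I(z)-I^\delta(z)$, with no absolute value, is asserted.
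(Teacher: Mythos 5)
Your proposal is correct and follows essentially the same route as the paper's proof: the quantity you call $A_z$ is exactly $F\phi_z$ there, and the paper likewise uses linearity plus Cauchy--Schwarz to get $\|F\phi_z - F^\delta\phi_z\|^2 \le \mathcal{C}\max(\delta_1^2,\delta_2^2)$ and $\|F\phi_z\|^2\le\mathcal{C}$, then expands $\|F\phi_z\|^2-\|F^\delta\phi_z\|^2\le 2\|F\phi_z\|\,\|F\phi_z-F^\delta\phi_z\|+\|F\phi_z-F^\delta\phi_z\|^2$, which is algebraically the same combination you perform via the bound on $\|A_z^\delta\|$. No substantive difference.
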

\begin{proof} 
Let $u^{\infty,\delta}(\hat{x},d)$ and $F^\delta$ be the  scattering amplitude and the far-field operator for  noisy Cauchy data. That means
\begin{align}
u^{\infty,\delta}(\hat{x},d) &= \int_{\partial \Omega} u^\delta_\sc(y,d)  \frac{\partial \Phi^\infty(\hat{x},y)}{\partial \nu(y)} - \frac{\partial u^\delta_\sc(y,d)}{\partial \nu(y)}   \Phi^\infty(\hat{x},y)\dd s(y) \\
F^\delta \phi_z(\hat{x}) &=  \int_{\S^{n-1}} u^{\infty,\delta}(\hat{x},d) \phi_z(d) \dd s(d).
\end{align}
Using the Cauchy-Schwarz inequality we have
\begin{align*}
|u^\infty(\hat{x},d) - u^{\infty,\delta}(\hat{x},d)| \leq \|(u_\sc - u_\sc^\delta)(\cdot,d)\| \left\|  \frac{\partial \Phi^\infty(\hat{x},\cdot)}{\partial \nu(\cdot)}  \right\|  +  \left\|  \frac{\partial (u_\sc - u_\sc^\delta)(\cdot,d)}{\partial \nu(\cdot)}  \right\|   \|\Phi^\infty(\hat{x},\cdot)\|,
\end{align*}
and hence
\begin{align*}
&|u^\infty(\hat{x},d) - u^{\infty,\delta}(\hat{x},d)|^2 \\
& \leq \left(\|(u_\sc - u_\sc^\delta)(\cdot,d)\|^2    +  \left\|  \frac{\partial (u_\sc - u_\sc^\delta)(\cdot,d)}{\partial \nu(\cdot)}  \right\|^2 \right)   \left ( \|\Phi^\infty(\hat{x},\cdot)\|^2 + \left\|\frac{\partial \Phi^\infty(\hat{x},\cdot)}{\partial \nu(\cdot)}  \right\|^2 \right).
\end{align*}
Let $C =   \| \Phi^\infty \|^2_{L^2( \S^{n-1} \times \partial \Omega)} +  \left\|  \partial  \Phi^\infty/\partial \nu \right \|^2_{L^2( \S^{n-1} \times \partial \Omega)} $. This leads to 
\begin{align*}
\|u^\infty - u^{\infty,\delta} \|^2_{L^2(\S^{n-1}\times\S^{n-1})} 
& \leq  
C\left(\|(u_\sc - u_\sc^\delta)\|^2_{L^2(\partial \Omega\times \S^{n-1})}    +  \left\|  \frac{\partial (u_\sc - u_\sc^\delta)}{\partial \nu}  \right\|^2_{L^2(\partial \Omega\times \S^{n-1})}  \right)   \\
& \leq C \left( \delta_1^2 \| u_\sc \|^2_{L^2(\partial \Omega\times \S^{n-1})} + \delta_2^2 \left\|  \frac{\partial u_{\sc}}{\partial \nu} \right \|^2_{L^2(\partial \Omega\times \S^{n-1})} \right),
\end{align*}
which implies that
\begin{align*}
\|F\phi_z - F^\delta \phi_z\|^2  \leq C  | \S^{n-1}|^2 \max(\delta_1^2,\delta_2^2) \left(  \| u_\sc \|^2_{L^2(\partial \Omega\times \S^{n-1})} +  \left\|  \frac{\partial u_{\sc}}{\partial \nu} \right \|^2_{L^2(\partial \Omega\times \S^{n-1})} \right).
\end{align*}
Similarly we also have
$$
\|F\phi_z\|^2  \leq C  | \S^{n-1}|^2 \left(  \| u_\sc \|^2_{L^2(\partial \Omega\times \S^{n-1})} +  \left\|  \frac{\partial u_{\sc}}{\partial \nu} \right \|^2_{L^2(\partial \Omega\times \S^{n-1})} \right).
$$
Using $I(z) 
= \|F\phi_z\|^2$ and the triangle inequality we have
\begin{align*}
|I_{}(z) - I^{\delta}(z)| &= | \|F\phi_{z}\|^2- \| F^\delta\phi_{z}\|^2 |
\leq \| F\phi_{z} -F^\delta\phi_{z} \|\left( \| F\phi_{z} \|+ \| F^\delta\phi_{z} \| \right)\\
&\leq 2\| F\phi_{z} \| \| F\phi_{z} -F^\delta\phi_{z} \| +  \|F\phi_{z} -F^\delta\phi_{z} \|^2\\
&\leq  C |\S^{n-1}|^2 \left(  \| u_\sc \|^2_{L^2(\partial \Omega\times \S^{n-1})} +  \left\|  \frac{\partial u_{\sc}}{\partial \nu} \right \|^2_{L^2(\partial \Omega\times \S^{n-1})} \right)  \left( 2\sqrt{\max(\delta_1^2,\delta_2^2)} + \max(\delta_1^2,\delta_2^2) \right).
\end{align*}
proving the theorem.
\end{proof}

\begin{remark}
We note that if the far-field measurements are taken on the boundary of the ball of large radius $R$, 
by the radiation condition we can approximate $\partial u_\sc/\partial \nu$ by $ik u_\sc$ 
in  $I(z)$. Then the modified indicator function 
\begin{equation}
\label{Ifar}
I_\mathrm{far}(z) := \int_{\S^{n-1}}\left| \int_{\S^{n-1}} \int_{\partial \Omega} u_\sc(y,d)  \frac{\partial \Phi^\infty(\hat{x},y)}{\partial \nu(y)} -ik  u_\sc(y,d)   \Phi^\infty(\hat{x},y)\dd s(y) \phi_z(d)\dd s(d) \right|^2 \dd s(\hat{x})
\end{equation}
only needs  
the scattered field data $u_\sc(x,d)$ and approximates the indicator function $I(z)$.
\end{remark}

\section{Numerical study}
\label{sec5}
In this section we study the numerical performance of the sampling method for both simulated and experimental data in two dimensions. More precisely, for simulated data, we will examine the performance of the method  for data with different wave numbers (Figure~\ref{fi1}), highly noisy data (Figure~\ref{fi2}), far-field data (Figure~\ref{fi3}), and  limited aperture  data (Figure~\ref{fi4}). 
Reconstruction results using  the  indicator function $I_{\mathrm{far}}(z)$  are also presented in the case of  far-field data. For experimental data, we apply the indicator function  $I_{\mathrm{far}}(z)$  to  three data sets of dielectric and metallic objects from the Fresnel Institute (Figure~\ref{fi5}). 
 For the pictures in this section, the indicator functions are scaled by dividing by their maximal values. 
  
  The following common parameters and notations are used in the numerical examples of simulated data
\begin{align*}
&\text{Sampling  domain} = (-2,2)\times(-2,2), \\
&\text{Number of sampling points} = 96^2,\\
 &\partial \Omega = \{ (x_1,x_2)^\top \in \R^2: x_1^2 + x_2^2 = R^2 \}, \\
& \text{Near-field data: } R = 3, \\
 & \text{Far-field data: } R = 100, \\
 & \text{Number of data points on } \partial \Omega\text{: } N_x,\\
& \text{Number of incident plane waves: } N_d.
\end{align*}
  The following scattering objects are considered in the numerical examples.
  
\noindent
a) Kite-shaped  object 
\begin{align*}
\partial D &= \{x \in  \R^2: x=((\cos(t) + 0.65\cos(2t) - 0.65)/2,  1.5\sin(t)/2.5)^\top,  0\leq t \leq 2\pi\}, \\
\eta(x)  &= 0.5 + 0.1i \quad \text{in } D.
\end{align*}
b) Disk-and-rectangle   object 
\begin{align*}
\mathrm{disk} &= \{(x_1,x_2)^\top \in \R^2: (x_1+0.6)^2 + (x_2-0.6)^2 < 0.4^2 \}, \\
\mathrm{rectangle} &= \{(x_1,x_2)^\top \in \R^2: |x_1-0.6| < 0.45, |x_2+0.6| < 0.25\}, \\
D &= \mathrm{disk} \cup \mathrm{rectangle},  \\ 
\eta(x)  &= 0.5 \quad \text{in } D.
\end{align*}
c) Square-shaped  object with  cavity
\begin{align*}
\mathrm{cavity} &= \{ (x_1,x_2)^\top \in \R^2: x_1^2 + x_2^2 < 0.3^2 \}, \\
\mathrm{square} &= \{ (x_1,x_2)^\top \in \R^2: |x_1| < 0.5, |x_2| < 0.5\}, \\
D &= \mathrm{square} \setminus \mathrm{\overline{cavity}},  \\ 
\eta(x)  &= 1 \quad \text{in } D.
\end{align*}
 To generate the scattering data for the numerical examples, we solve the Lippmann-Schwinger equation~\eqref{LS}
using a  spectral Galerkin  method   developed in~\cite{Lechl2014}. 
Using $N_d$ incident plane waves and measuring the data at $N_x$ points on $\partial \Omega$, 
the Cauchy data $u_\sc(x,d), \partial u_\sc/ \partial \nu(x,d)$, where $(x,d) \in \partial \Omega \times \S$, are then $N_d \times N_x$ matrices. The artificial noise is added to the data as follows.
Two complex-valued noise matrices $\mathcal{N}_{1,2}$ 
containing random numbers that are uniformly distributed in the complex square 
$$\{ a+ i b\, : \, |a| \leq 1, \, |b| \leq 1 \} \subset \C$$
 are added to the data matrices. For simplicity we consider the same noise level  $\delta$ for both $u_\sc$ and $\partial u_\sc/ \partial \nu$.  
The  noisy data $u_\sc^\delta$ and $\partial u^\delta_\sc/ \partial \nu$ are given by
\begin{align*}
u_\sc^\delta 
 := u_\sc 
   + \delta\frac{\mathcal{N}_1}{\|\mathcal{N}_1\|_F} \|u_\sc  \|_F, \quad 
\frac{\partial u_\sc^\delta }{\partial \nu} := \frac{\partial u_\sc }{\partial \nu}  
   + \delta\frac{\mathcal{N}_2}{\|\mathcal{N}_2\|_F} \left\|\frac{\partial u_\sc }{\partial \nu} \right \|_F,
\end{align*}
where $\|\cdot\|_F$ is the Frobenius matrix norm.

\subsection{Reconstruction with different wave numbers (Figure \ref{fi1})}
We present in Figure~\ref{fi2}  reconstruction results for the wave numbers $k = 4$ (wavelength $\approx$ 1.57) 
and $k = 8$ (wavelength $\approx$ 0.78). The data 
are near-field Cauchy data with 30$\%$ noise. We use $N_x\times N_d = 64\times 64$  for the kite-shaped  object
and disk-and-rectangle  object, while the square-shaped object with cavity is examined with 
$N_x\times N_d = 96\times 96$. It can be seen from the Figure~\ref{fi1} that the reconstruction results 
are improved with a larger value of $k$. We also see that the two imaging functionals can image very well the square-shaped 
object with cavity. It is interesting that this object violates the assumption that $\R^n \setminus \overline{D}$ must be connected when studying the well-posedness of the direct scattering problem~\eqref{Helm}--\eqref{radiation}.

\begin{figure}[h!!!]
\centering
\subfloat[True geometry]{\includegraphics[width=4cm]{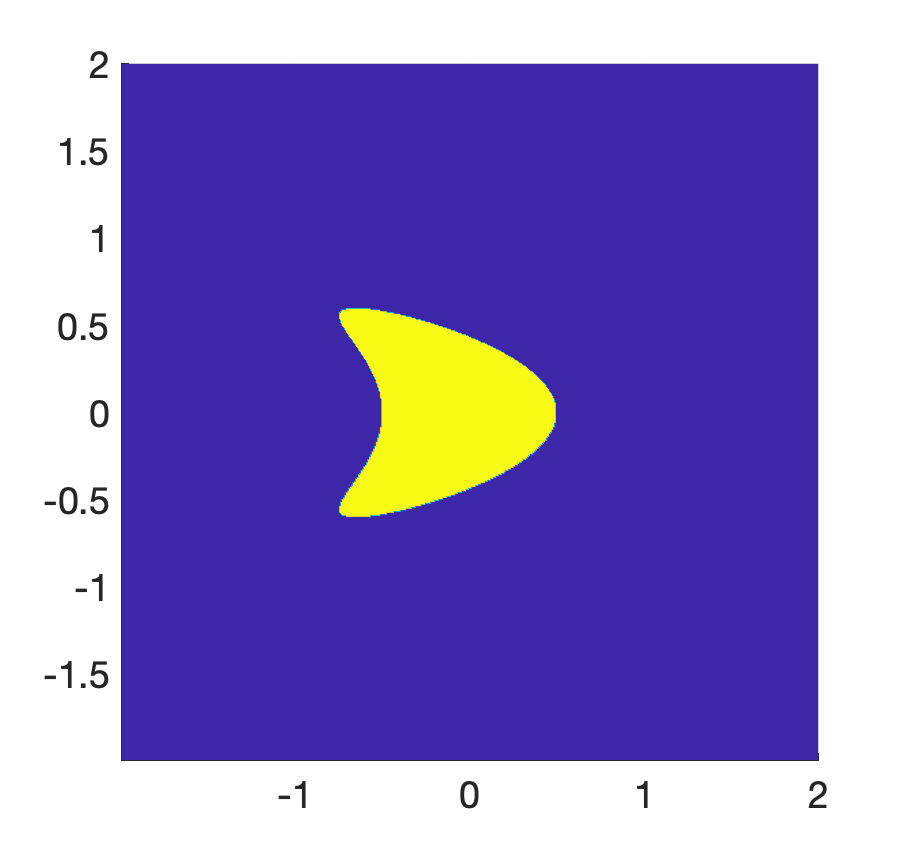}} \hspace{0cm}
\subfloat[$k =4$]{\includegraphics[width=4.5cm]{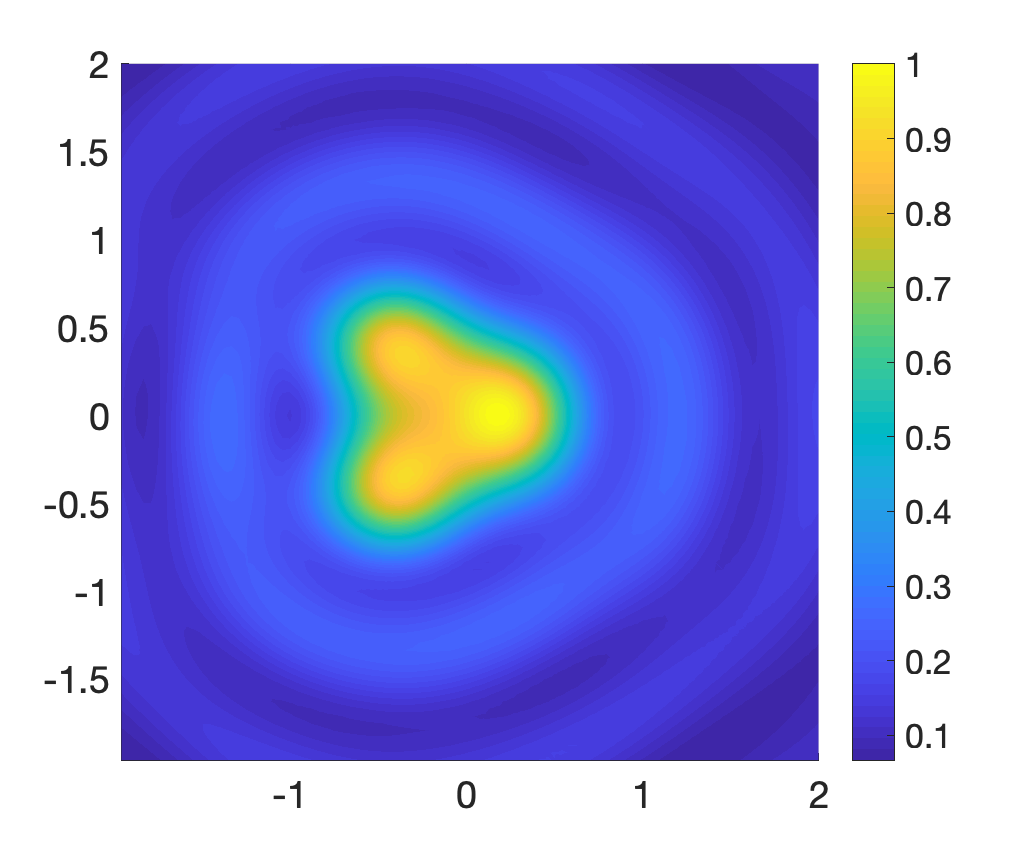}}  \hspace{0cm} 
\subfloat[$ k =8$]{\includegraphics[width=4.5cm]{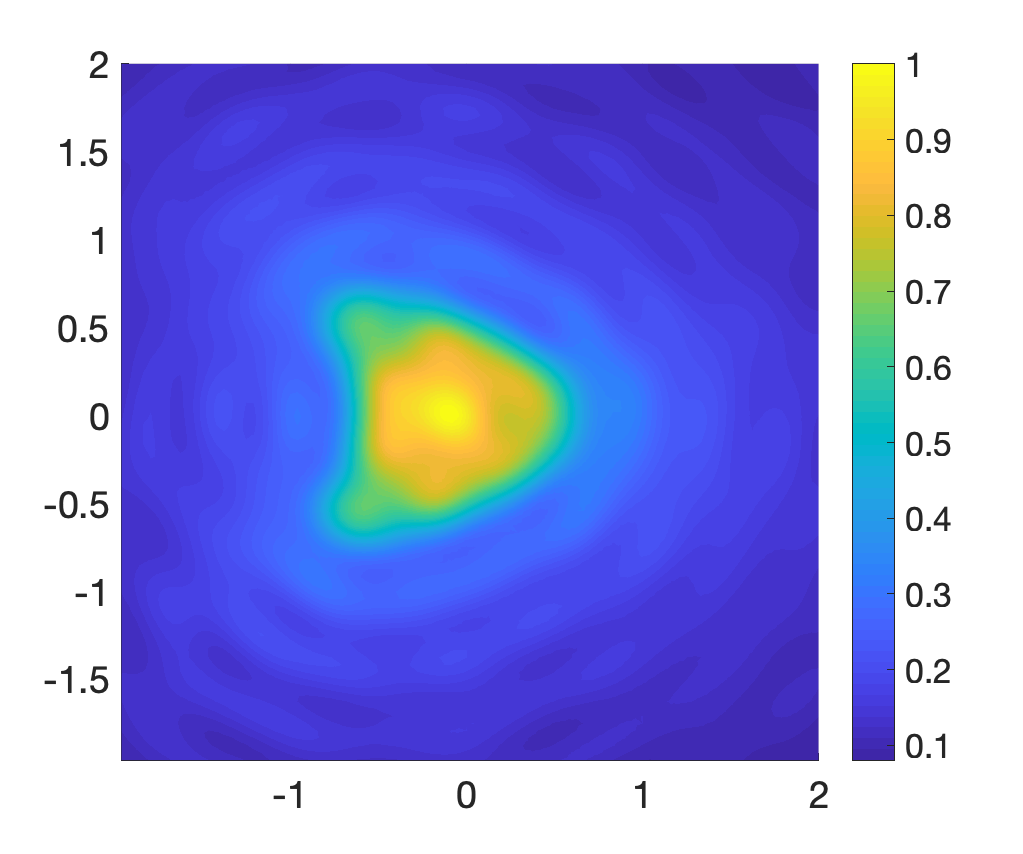}}  \hspace{0cm}\\
\subfloat[True geometry]{\includegraphics[width=4cm]{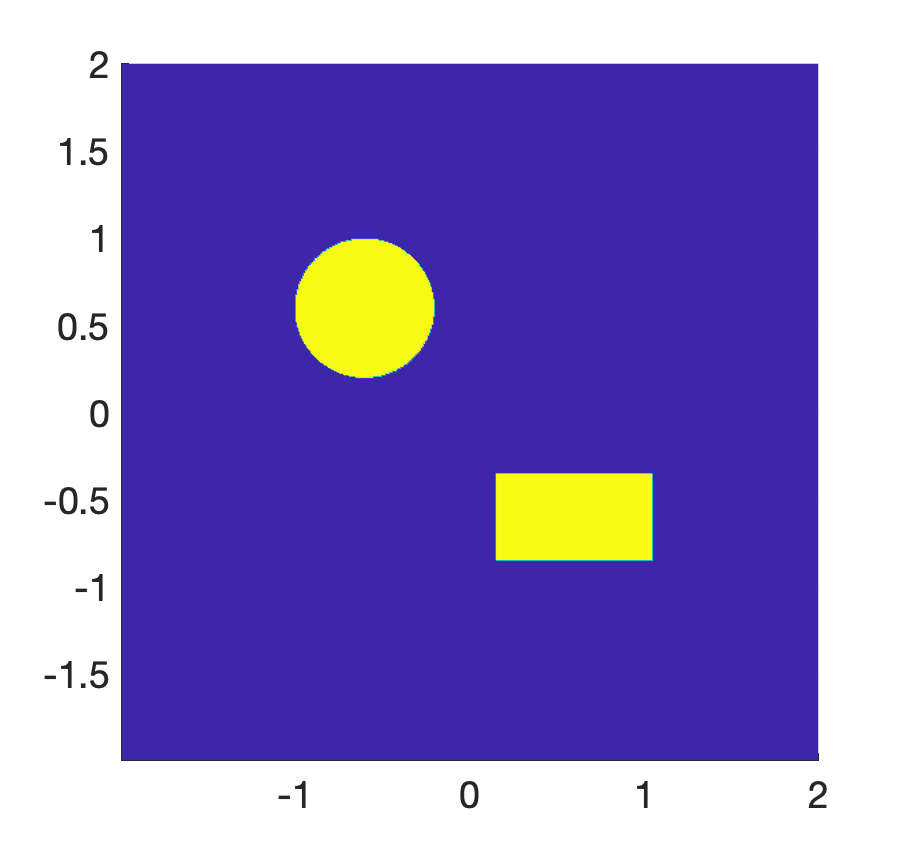}} \hspace{0cm}
\subfloat[$ k =4$]{\includegraphics[width=4.5cm]{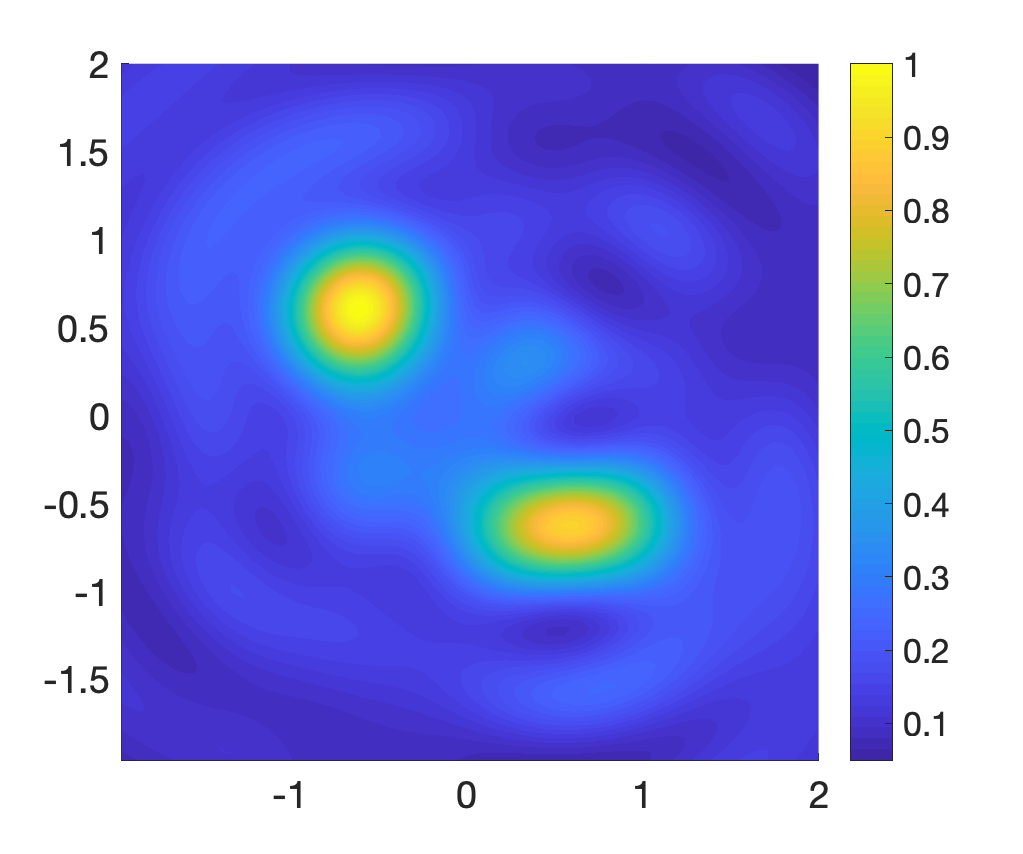}}  \hspace{0cm} 
\subfloat[$k =8$]{\includegraphics[width=4.5cm]{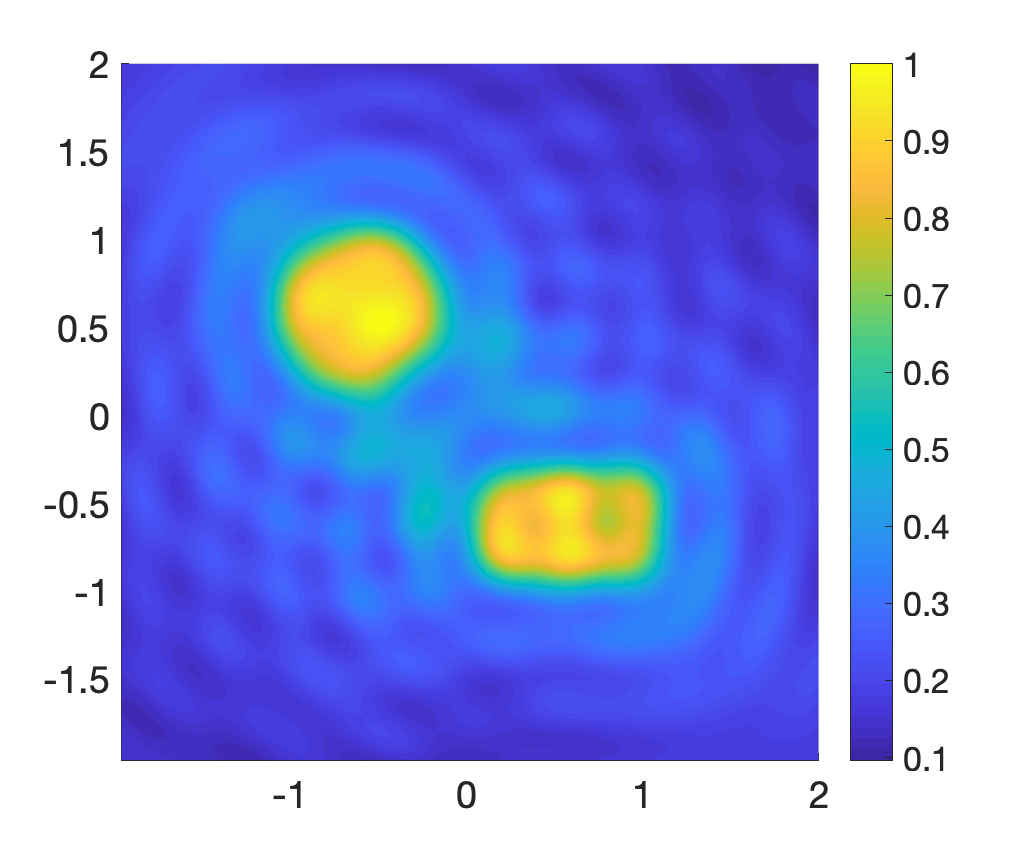}}  \hspace{0cm}\\
\subfloat[True geometry]{\includegraphics[width=4cm]{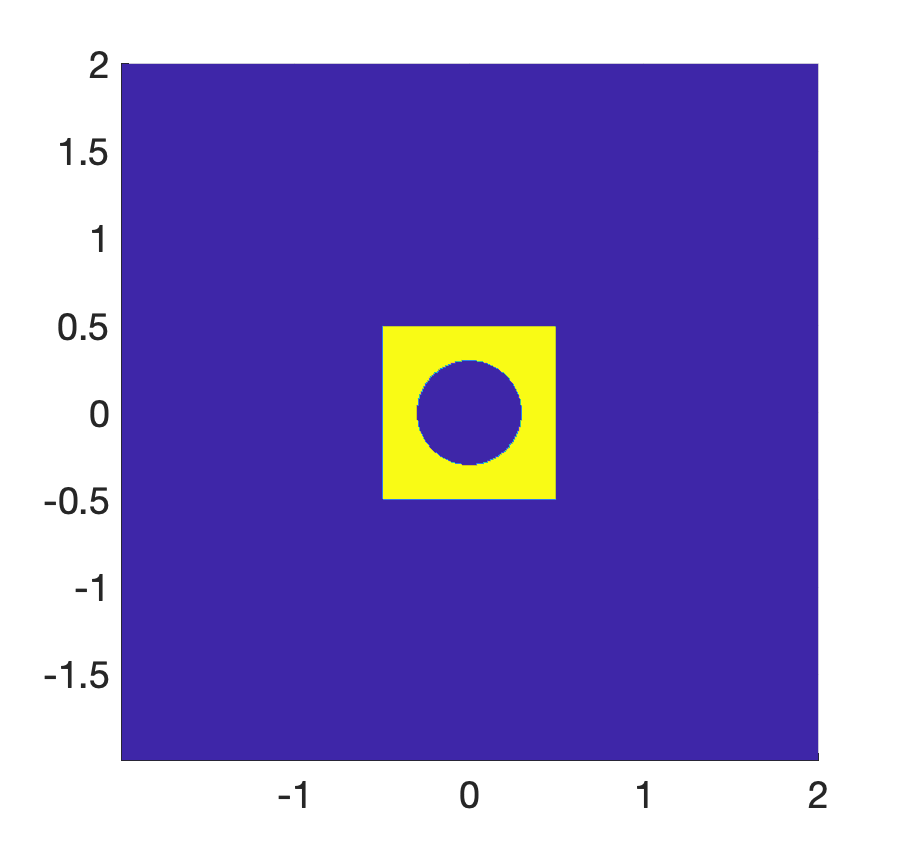}} \hspace{0cm}
\subfloat[$ k =4$]{\includegraphics[width=4.5cm]{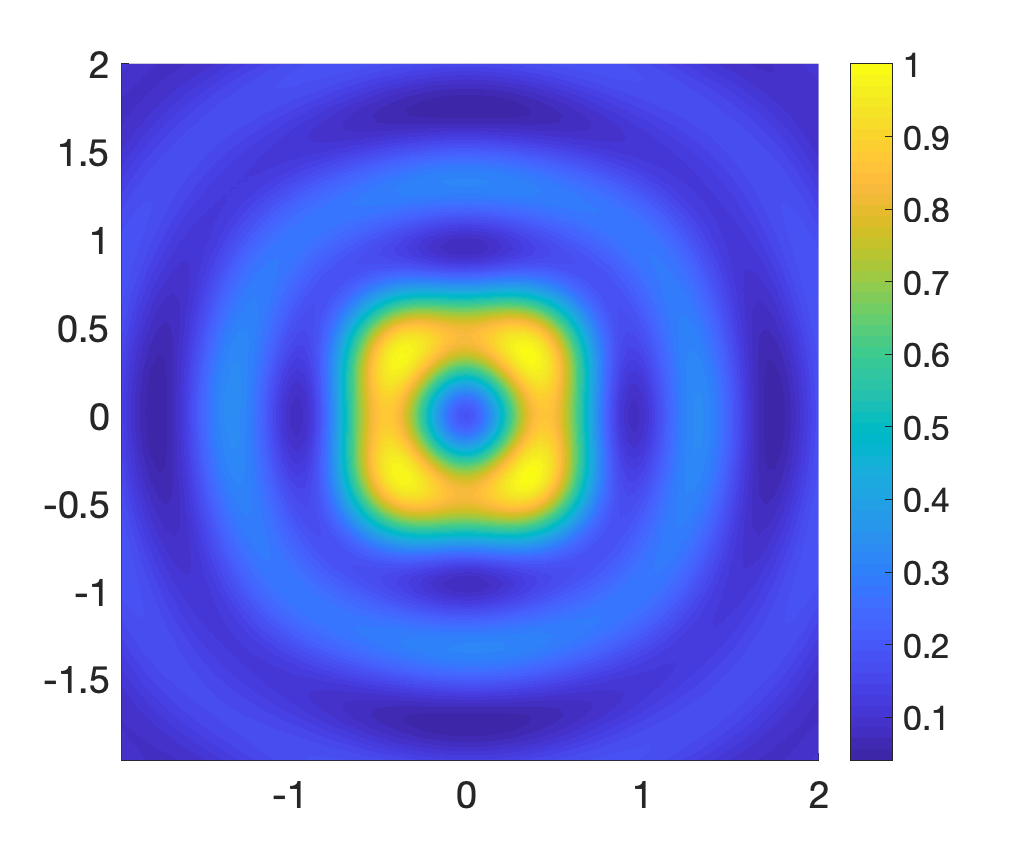}}  \hspace{0cm} 
\subfloat[$k =8$]{\includegraphics[width=4.5cm]{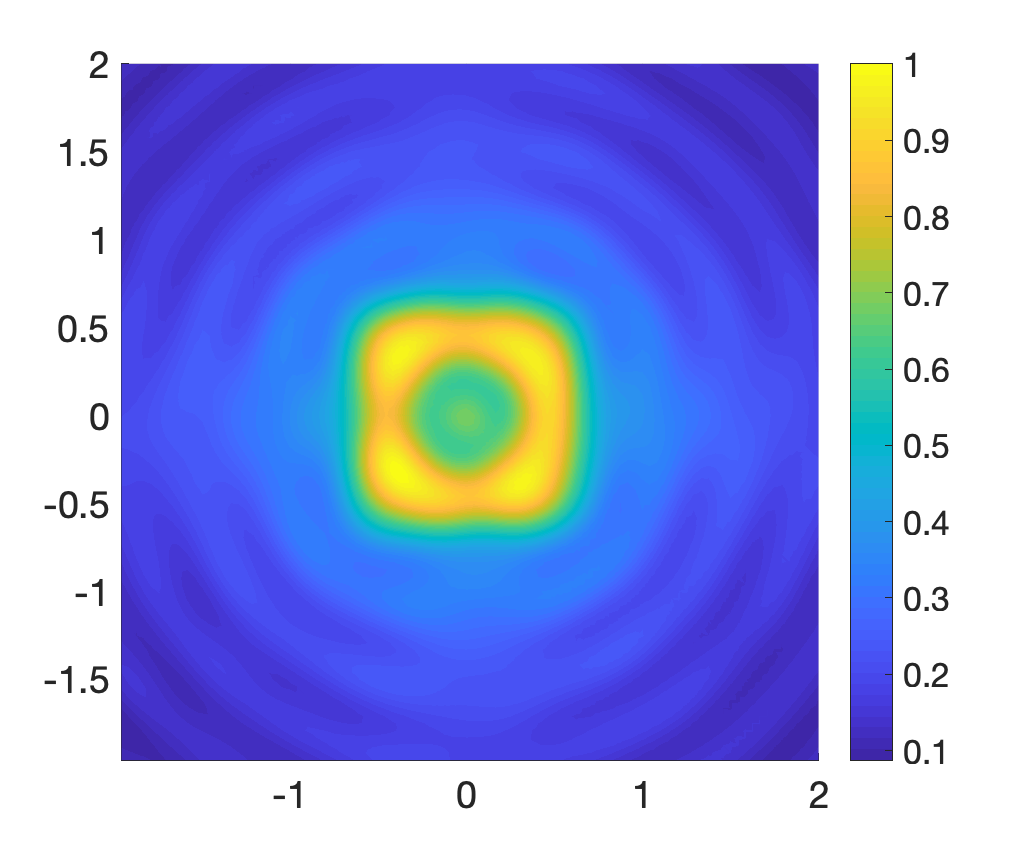}}  \hspace{0cm}
\caption{Reconstruction with near-field Cauchy data for different wave numbers.  
There is  30$\%$ noise added to the data ($\delta  =0.3$).
First column (a, d, g): true geometry. Second column (b, e, h): reconstruction with $k=4$.
Third column (c, f, i): reconstruction with  $k=8$.
 } 
 \label{fi1}
\end{figure}

\subsection{Reconstruction with highly noisy data (Figure \ref{fi2})}
We present in Figure~\ref{fi2}  reconstruction results for near-field Cauchy data perturbed by $60\%$ and $90\%$ 
noise. The wave number $k = 8$ and again we use $N_x\times N_d = 64\times 64$  for the kite-shaped  object
and disk-and-rectangle  object,  and $N_x\times N_d = 96\times 96$ for  the square-shaped object with cavity.
Although we can notice some deterioration in the case of $90\%$ noise, the reconstructions are still
pretty reasonable. These results show that the sampling method is extremely robust with respect to noise
in the data.  We have also observed this robustness in the orthogonality sampling method for Maxwell's equations, see~\cite{Harri2020}.

\begin{figure}[h!!!]
\centering
\subfloat[True geometry]{\includegraphics[width=4cm]{kite_true}} \hspace{0cm}
\subfloat[ 60$\%$ noise]{\includegraphics[width=4.5cm]{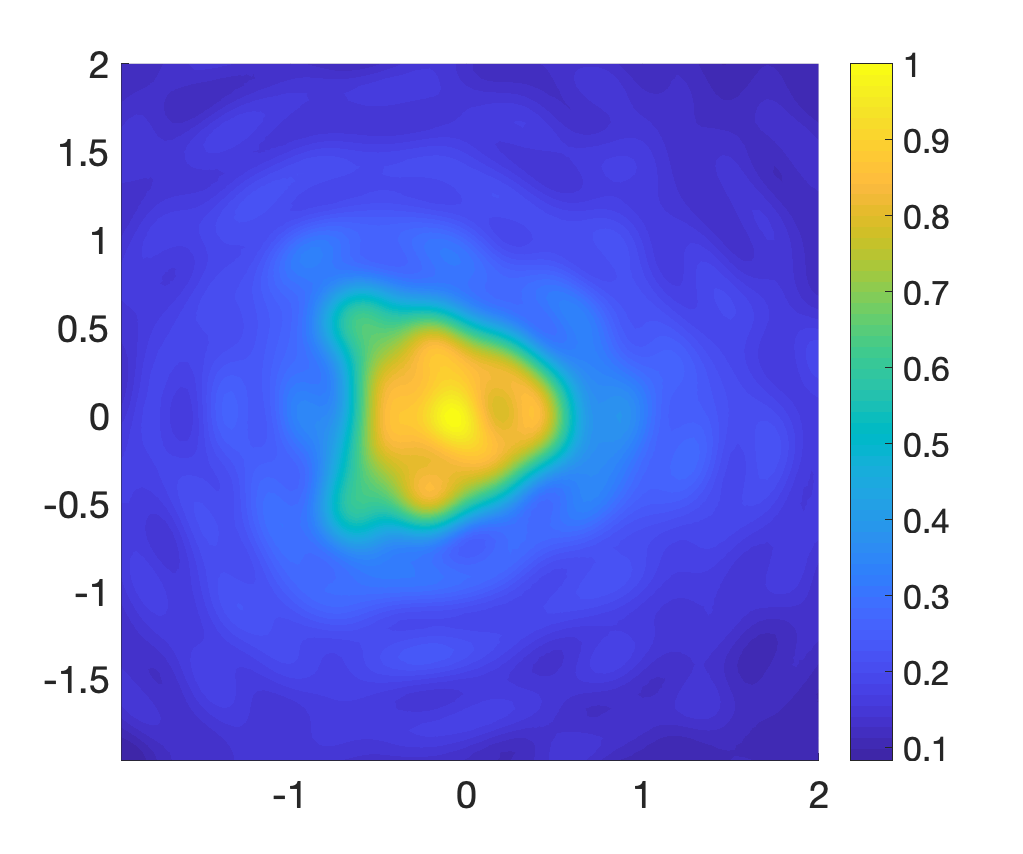}}  \hspace{0cm} 
\subfloat[90$\%$ noise]{\includegraphics[width=4.5cm]{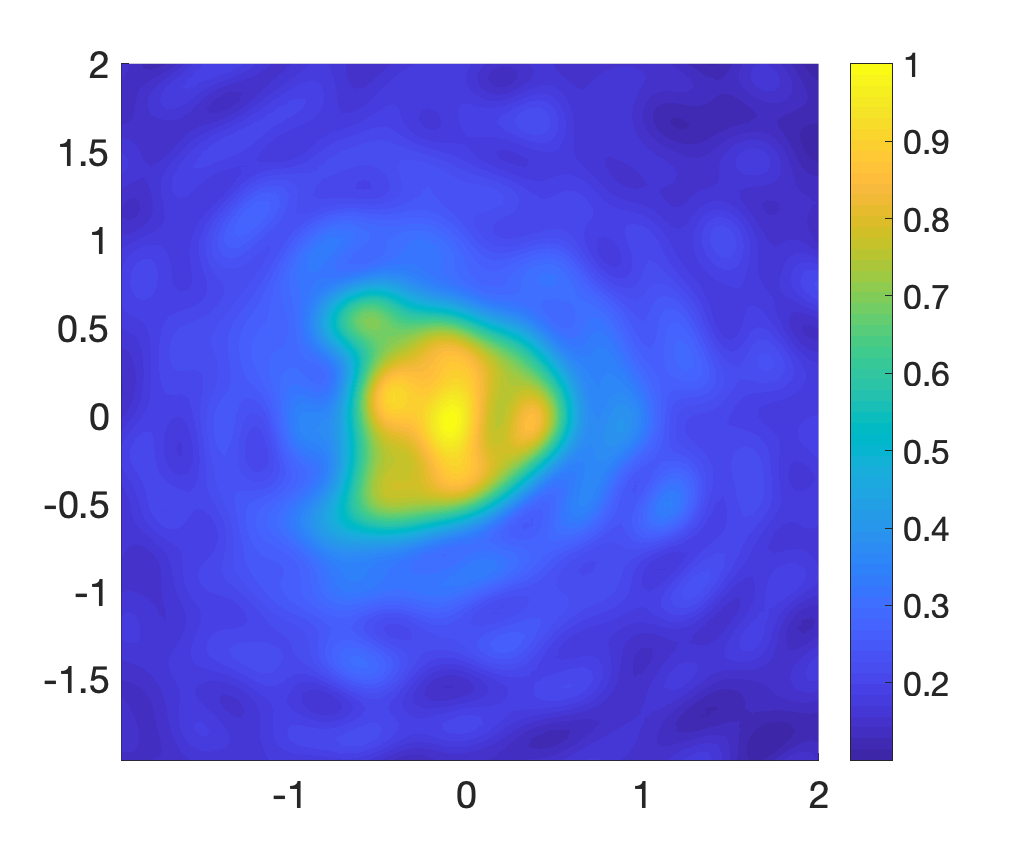}}  \hspace{0cm}\\
\subfloat[True geometry]{\includegraphics[width=4cm]{disk_rec_true}} \hspace{0cm}
\subfloat[60$\%$ noise]{\includegraphics[width=4.5cm]{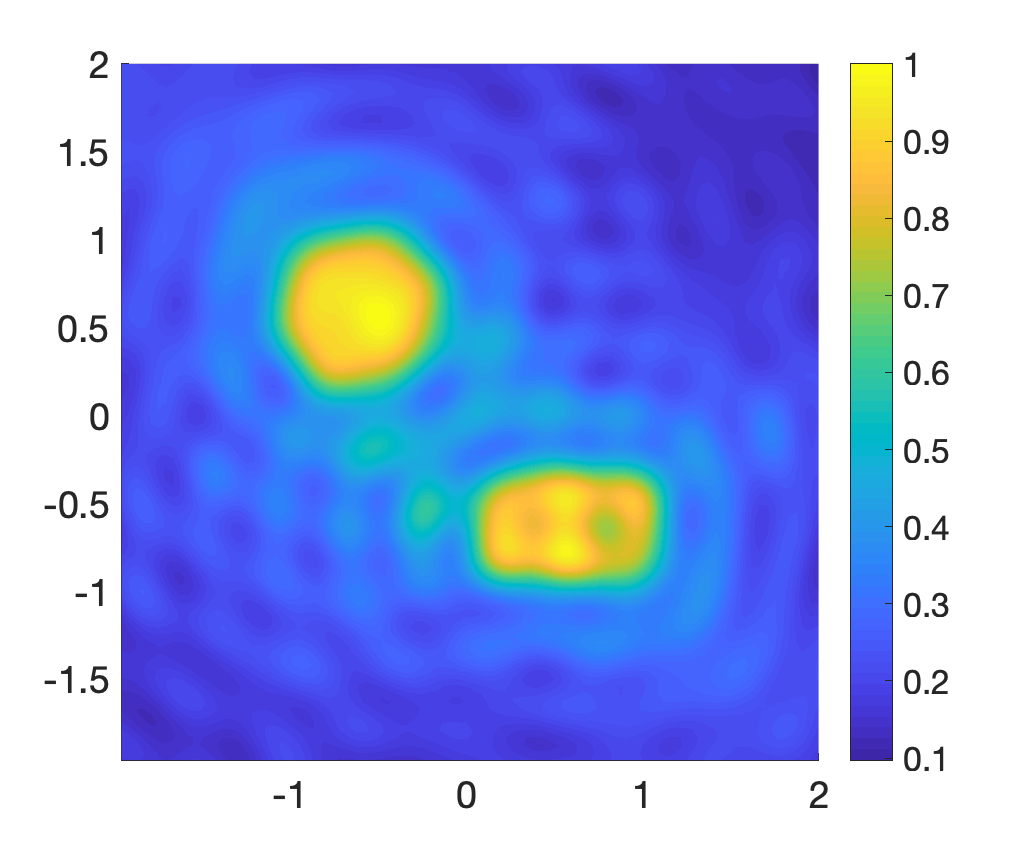}}  \hspace{0cm} 
\subfloat[90$\%$ noise]{\includegraphics[width=4.5cm]{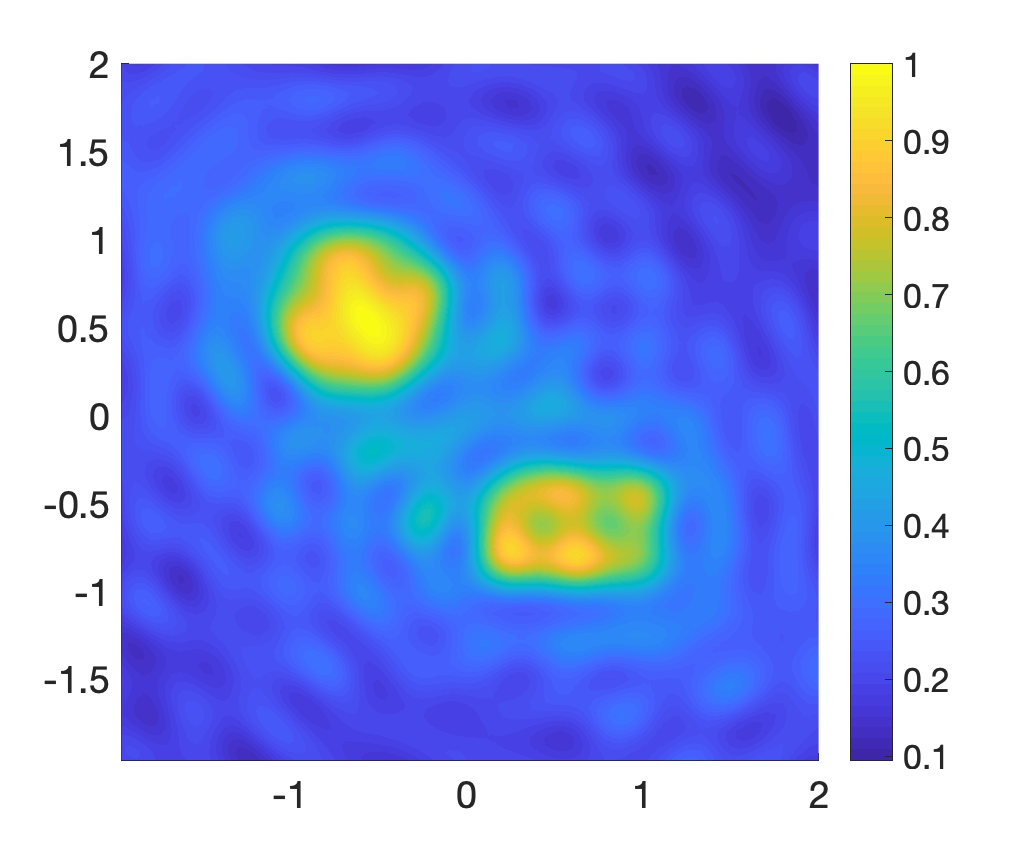}}  \hspace{0cm} \\
\subfloat[True geometry]{\includegraphics[width=4cm]{rechole_true}} \hspace{0cm}
\subfloat[60$\%$ noise]{\includegraphics[width=4.5cm]{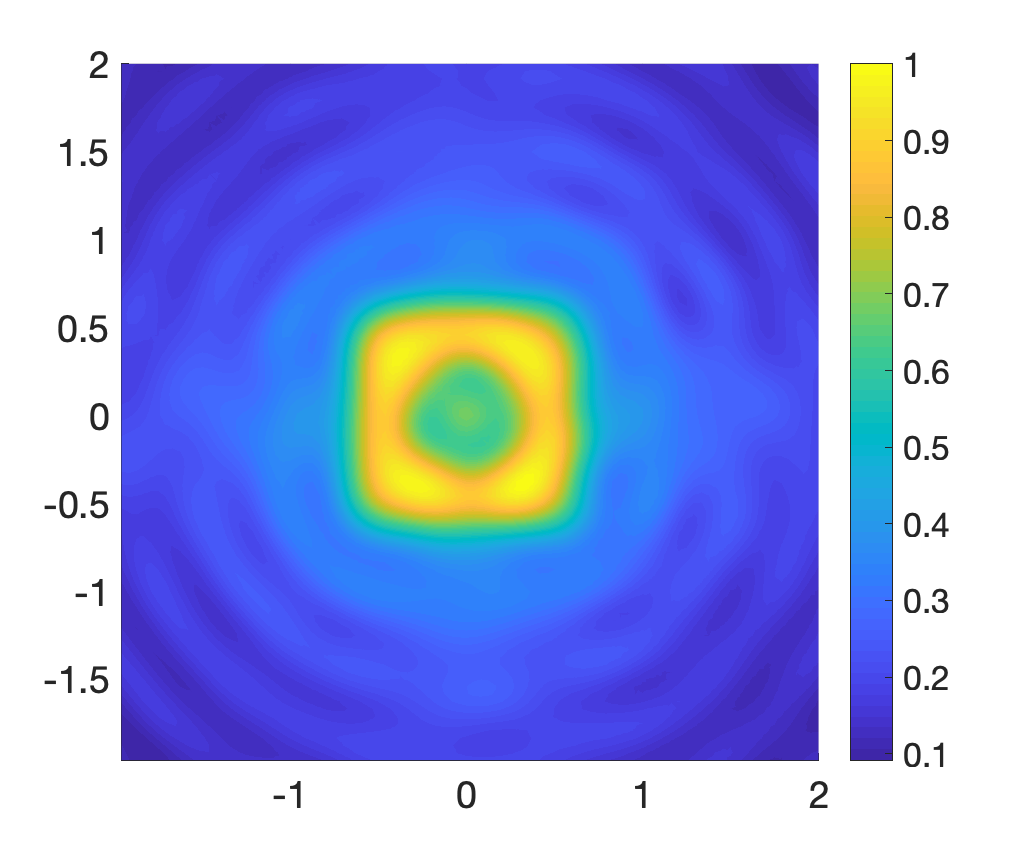}}  \hspace{0cm} 
\subfloat[90$\%$ noise]{\includegraphics[width=4.5cm]{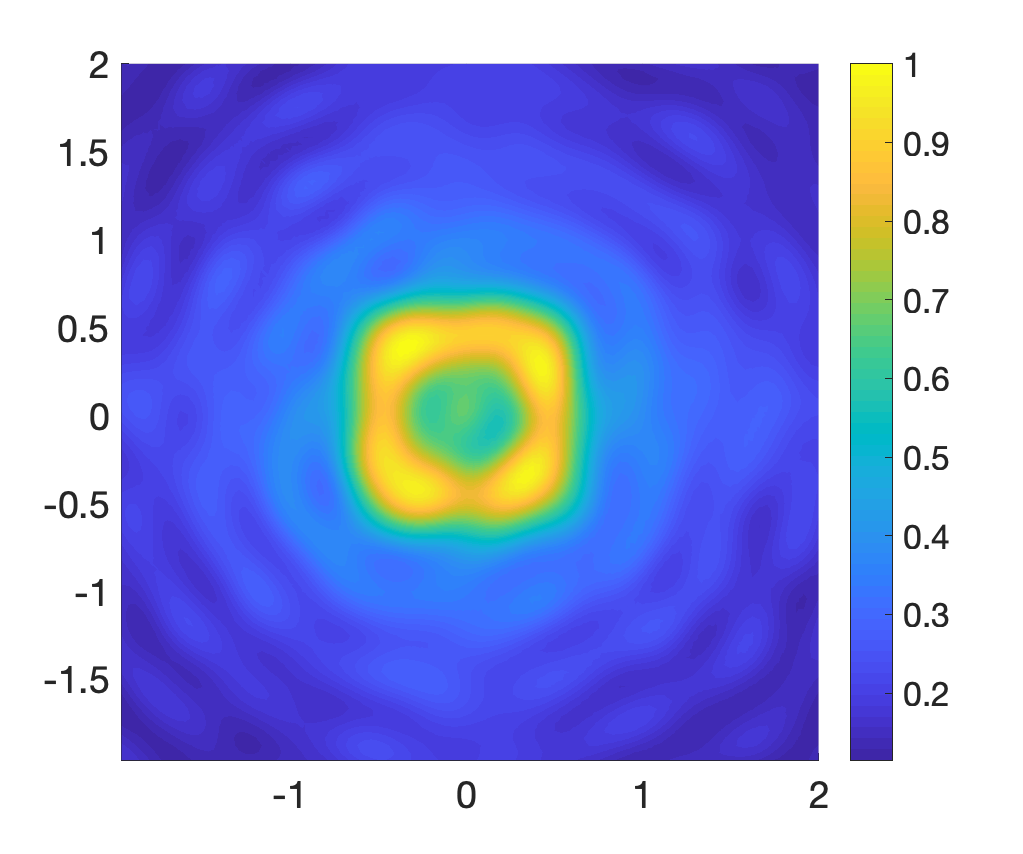}}  \hspace{0cm}
\caption{Reconstruction  with highly noisy near-field data. Wave number $k = 8$.  
First column (a, d, g): true geometry. Second column (b, e, h): reconstruction with $60\%$ noise.
Third column (c, f, i): reconstruction with $90\%$ noise.
 } 
 \label{fi2}
\end{figure}

\subsection{Reconstruction with far-field  data (Figure \ref{fi3})}
The focus of this example is to examine the performance of the sampling methods associated with $I(z)$ and $I_{\mathrm{far}}(z)$, defined by~\eqref{Ifar},  in the case of far-field data with $30\%$ noise. We recall that  
$I_{\mathrm{far}}(z)$  uses only $u_\sc$ instead of the Cauchy data. Again we consider $k = 8$
and the size $N_x\times N_d$ of the data matrices are the same as in the previous examples. As mentioned at the beginning of this section the far-field  data are measured on $\partial \Omega$ that is the circle of radius $R = 100$ (about 125 wavelengths away from the scattering objects). We can see in Figure~\ref{fi3} that the reconstruction results 
with far-field data are as good as those with  the near-field data. The two imaging functionals $I(z)$ and  $I_{\mathrm{far}}(z)$ provide similar results as expected.

\begin{figure}[ht!]
\centering
\subfloat[True geometry]{\includegraphics[width=4cm]{kite_true}} \hspace{-0.0cm}
\subfloat[$I(z)$]{\includegraphics[width=4.5cm]{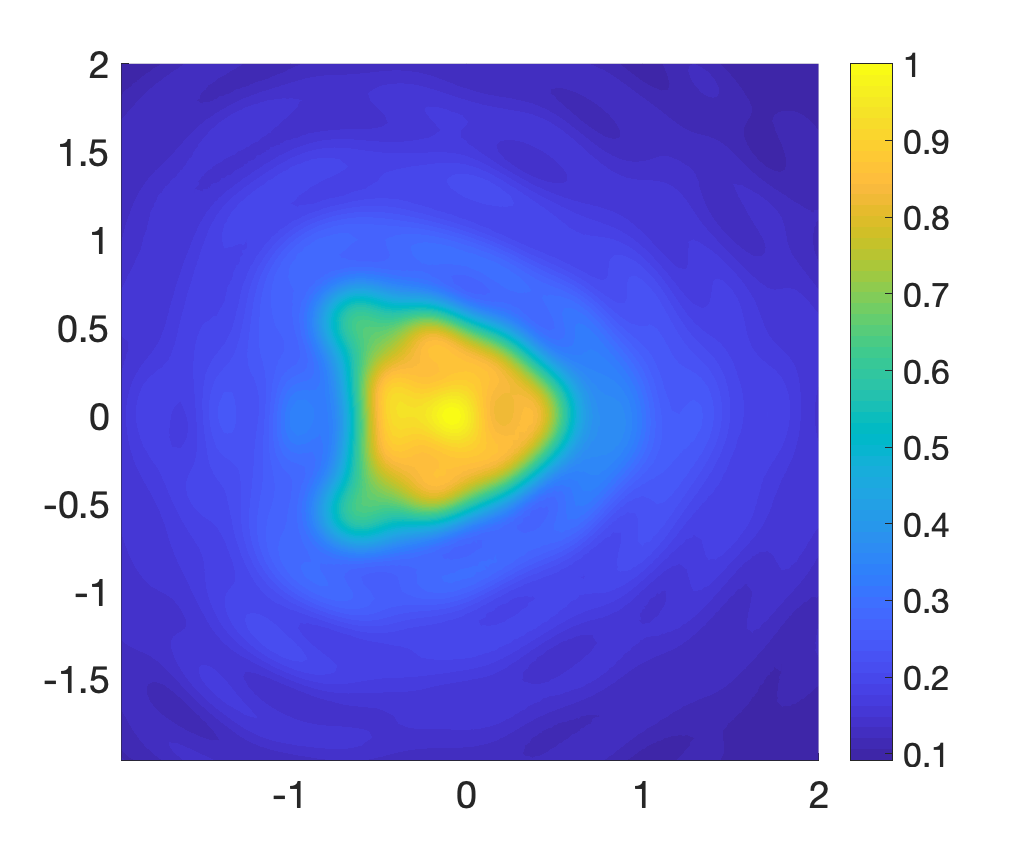}}  \hspace{-0.0cm} 
\subfloat[$I_{\mathrm{far}}(z)$]{\includegraphics[width=4.5cm]{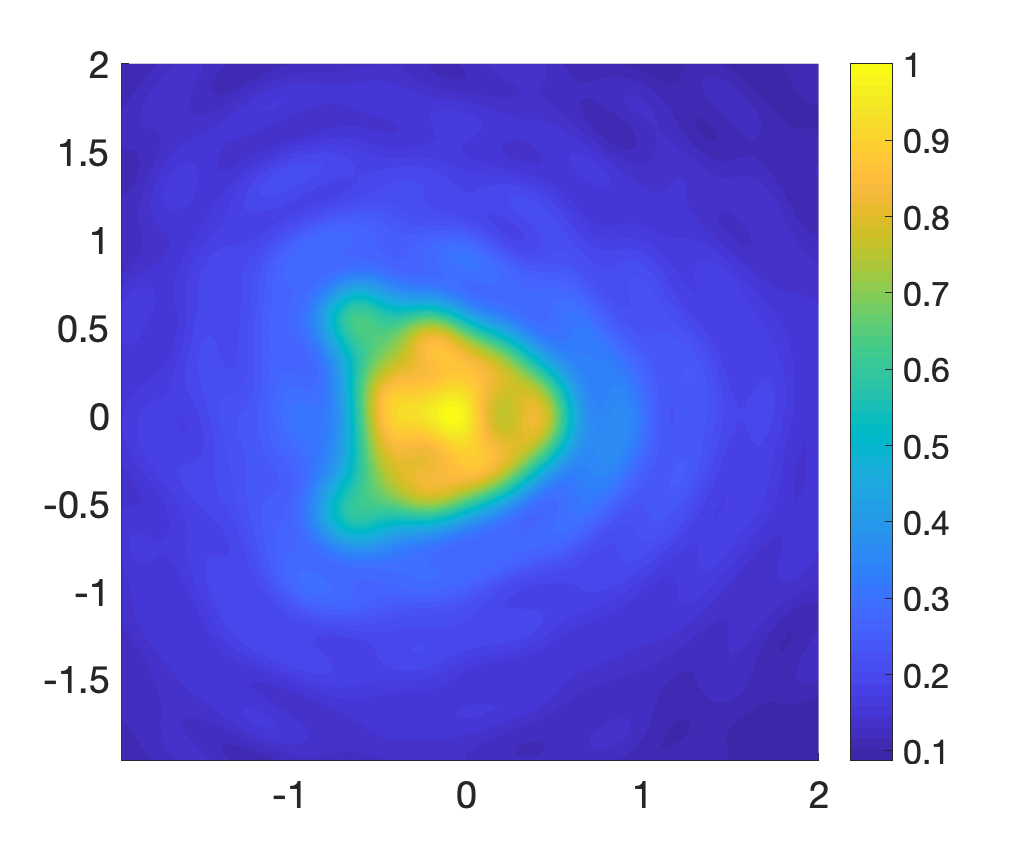}}  \hspace{-0.0cm}\\
\subfloat[True geometry]{\includegraphics[width=4cm]{disk_rec_true}} \hspace{-0.0cm}
\subfloat[$I(z)$]{\includegraphics[width=4.5cm]{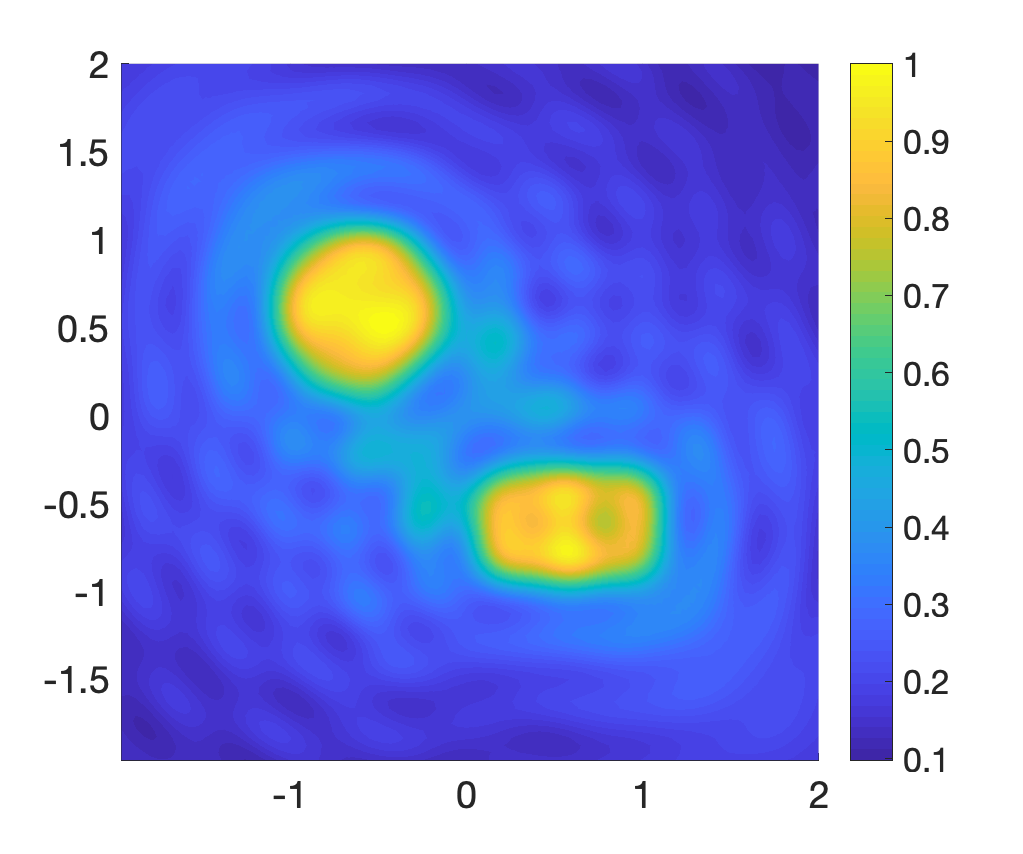}}  \hspace{-0.0cm} 
\subfloat[$I_{\mathrm{far}}(z)$]{\includegraphics[width=4.5cm]{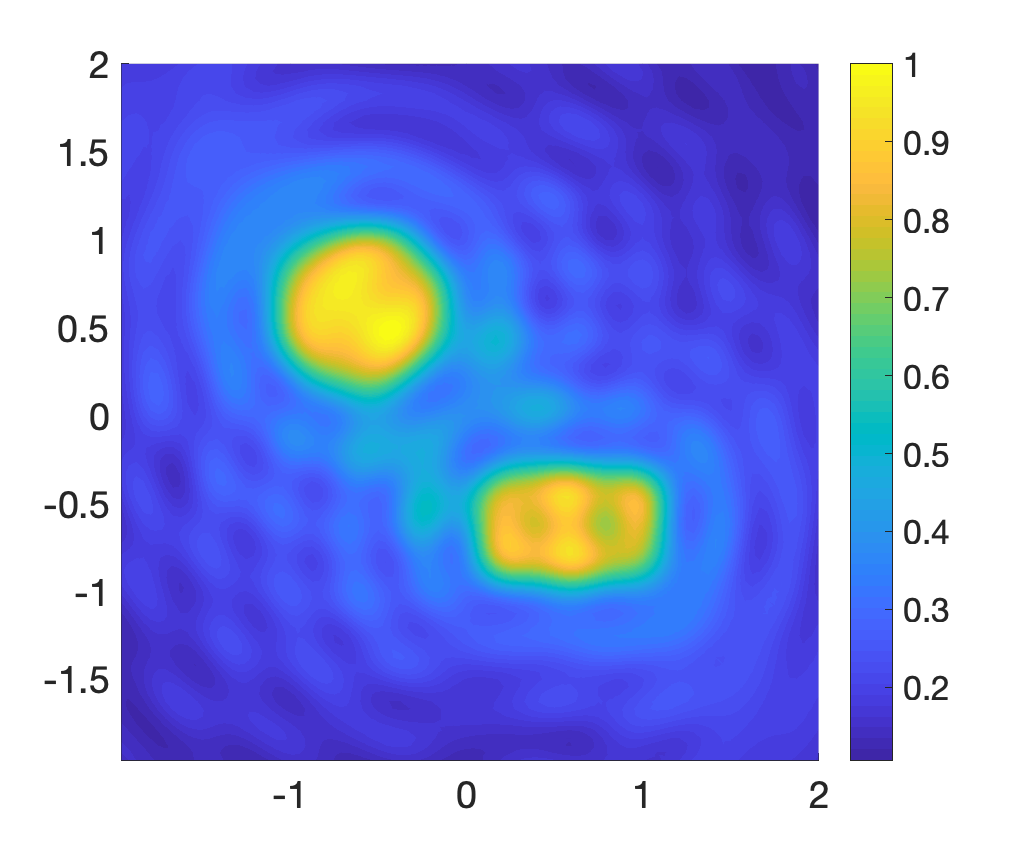}}  \hspace{-0.0cm}\\
\subfloat[True geometry]{\includegraphics[width=4cm]{rechole_true}} \hspace{-0.0cm}
\subfloat[$I(z)$]{\includegraphics[width=4.5cm]{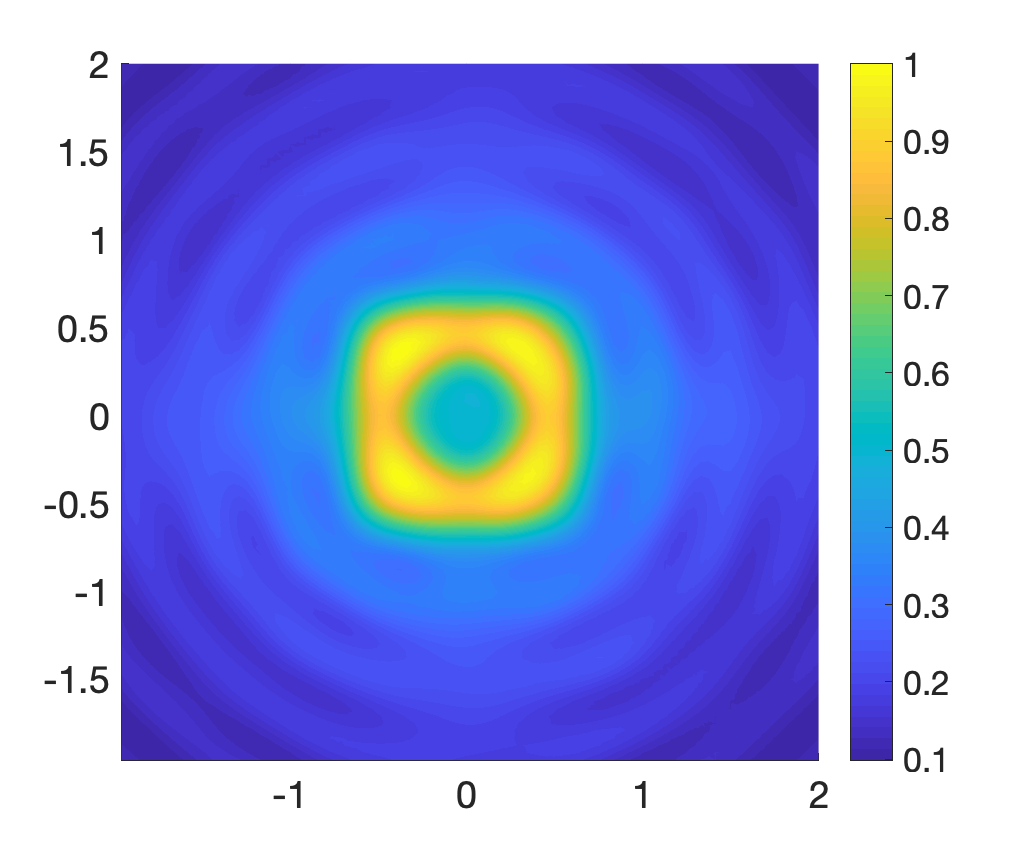}}  \hspace{-0.0cm} 
\subfloat[$I_{\mathrm{far}}(z)$]{\includegraphics[width=4.5cm]{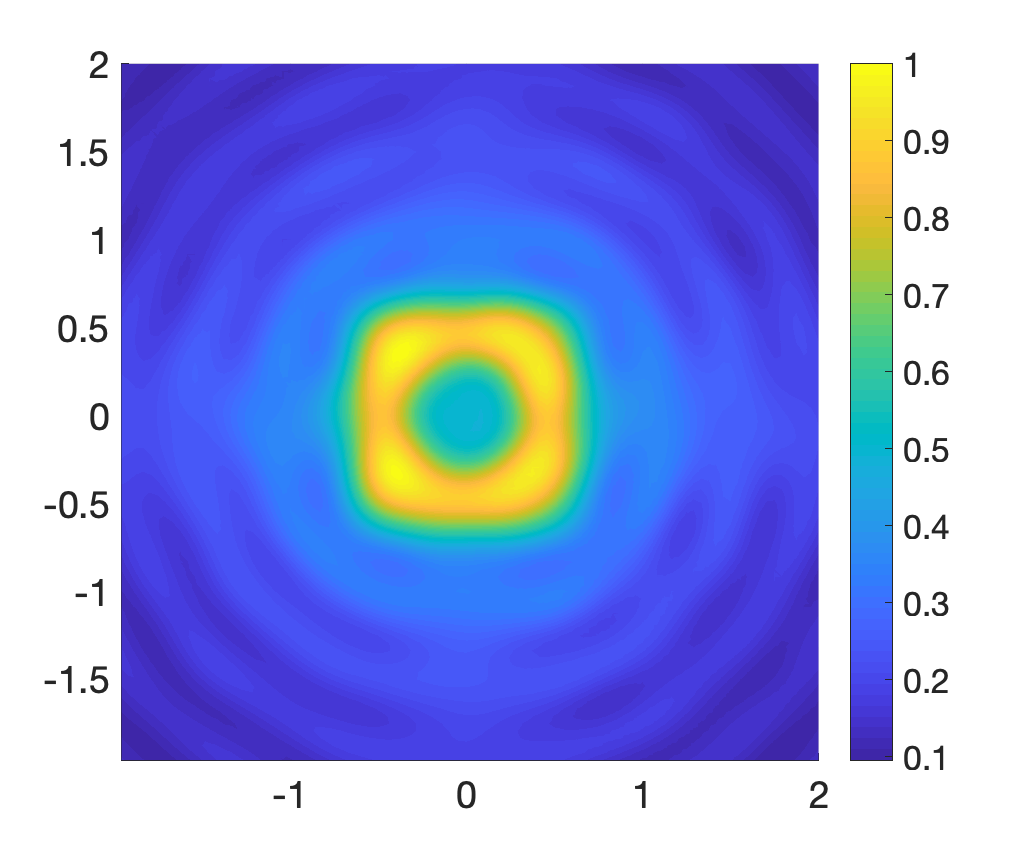}}  \hspace{-0.0cm}
\caption{Reconstruction with far-field data.  
There is  30$\%$ noise added to the data, $k = 8$. First column (a, d, g): true geometry. Second column (b, e, h): reconstruction   using $I(z)$.
Third column (c, f, i): reconstruction using $I_{\mathrm{far}}(z)$.
 } 
 \label{fi3}
\end{figure}

\subsection{Reconstruction with limited aperture  data (Figure \ref{fi4})}
In this last  example we consider near-field  data  for a half-circle aperture  ($30\%$ noise). More precisely,
 the incident point sources   are located on  the upper half 
 the measurement circle $\partial \Omega$, and the Cauchy data are  given on the bottom half of  $\partial \Omega$. Moreover, the number of data points $N_x$ and incident plane waves $N_d$ are also half of those of  the full data case, that means $N_x \times N_d = 32\times 32$ for the kite-shaped object and disk-and-rectangle object, and   $N_x \times N_d = 48\times 48$ for the square-shaped object with cavity.  As it can be seen from Figure~\ref{fi4}, the  reconstruction results  for the the first two objects are still pretty reasonable. However, the shape of the reconstructed square-shaped object with cavity is no longer accurate. This object is certainly more difficult to image compared with the first two objects.

\begin{figure}[ht!]
\centering
\subfloat[True geometry]{\includegraphics[width=4.5cm]{kite_true}} \hspace{0.5cm}
\subfloat[True geometry]{\includegraphics[width=4.5cm]{disk_rec_true}} \hspace{0.5cm}
\subfloat[True geometry]{\includegraphics[width=4.5cm]{rechole_true}} \hspace{0.5cm}\\
\subfloat[]{\includegraphics[width=4.8cm]{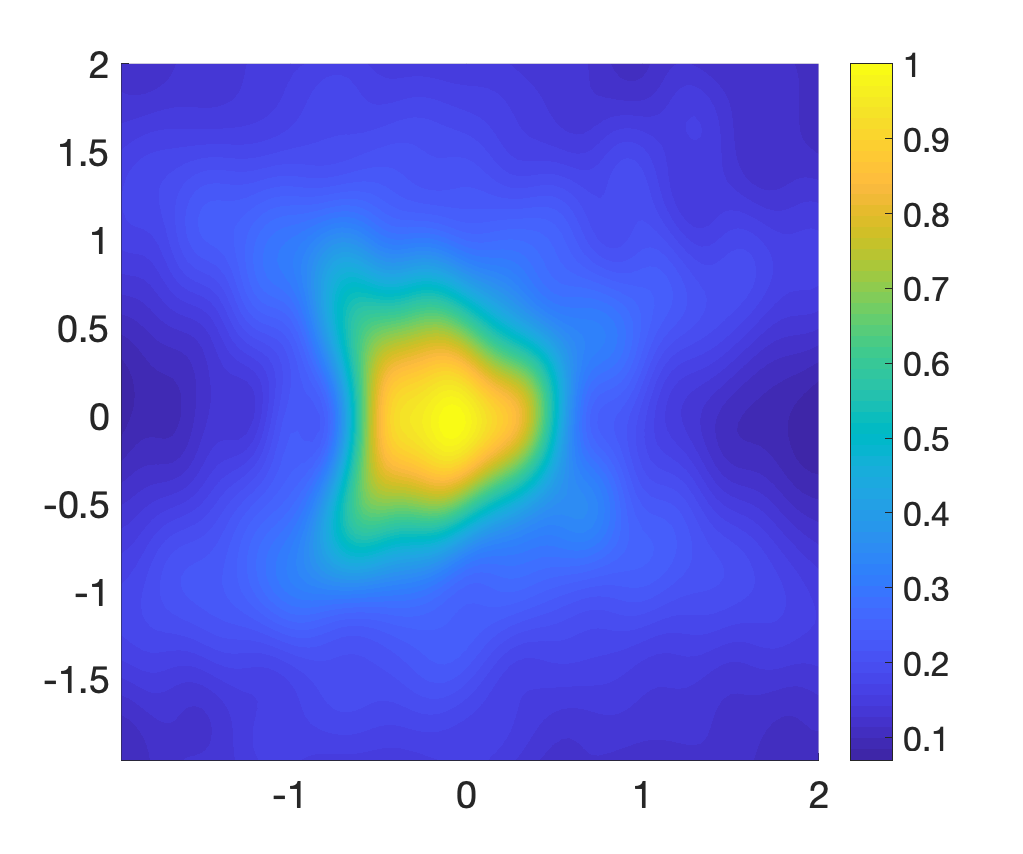}} \hspace{0.3cm} 
\subfloat[]{\includegraphics[width=4.8cm]{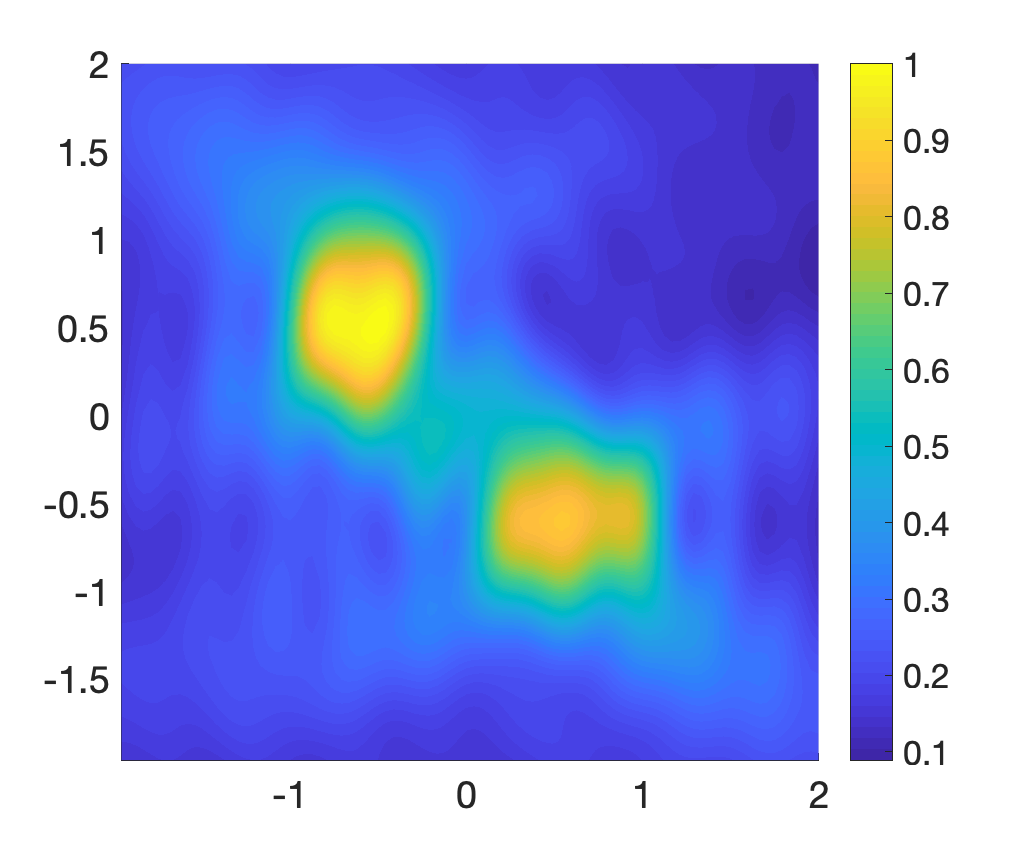}} \hspace{0.3cm} 
\subfloat[]{\includegraphics[width=4.8cm]{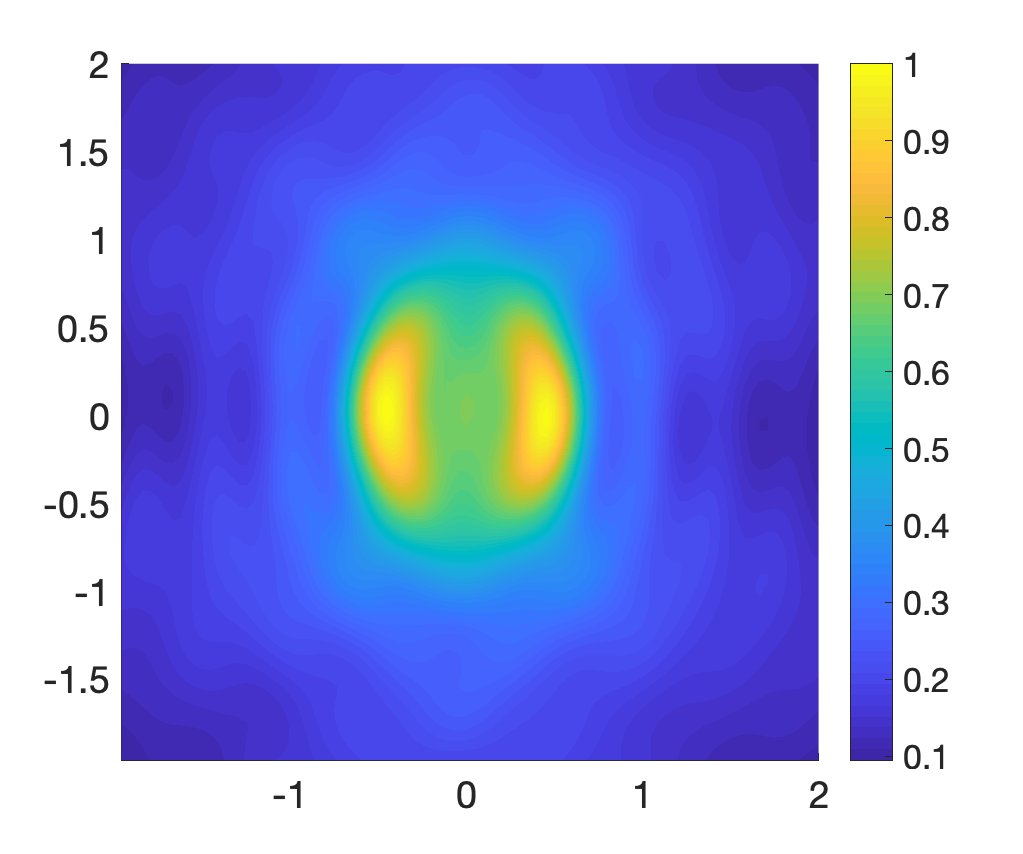}}  \hspace{-0.3cm} 
\caption{Reconstruction with partial limited aperture near-field data.  
There is  30$\%$ noise added to the data and $k = 8$.
 } 
 \label{fi4}
\end{figure}

\subsection{Reconstruction with experimental  data (Figure~\ref{fi5})}
In this section we verify the performance of the indicator function with experimental data provided by Institut Fresnel (France). 
We used the data sets of homogeneous objects. Three data sets were investigated: the first one named \textit{dielTM\_dec4f.exp} is associated with a dielectric target  that is a de-centered circular cross section of radius $15$ mm, and the second one is \textit{rectTM\_cent.exp} concerning a centered rectangular cross section (dielectric material) of dimensions $25.4 \times 12.7$ mm$^2$. The last one named \textit{uTM\_shaped.exp}  is associated with a metallic U-shaped object of size 50  $\times$ 80 mm$^2$. A detailed description of the experimental setup can be found in \cite{Belke2001}.  

We rescaled 40 mm to be 1 unit of length in our MATLAB simulations.  Measurement distance from the origin is about 0.76 m which is close to 19  in our simulation. The data are clearly measured in a far-field regime. The data matrix size is 72 (receivers) $\times$ 36 (incident sources), where 72 receivers are distributed at the angular range from $60^\circ$ to $300^\circ$ in steps of $5^\circ$ and  the rotation of the target for the source is from $0^\circ$ to $350^\circ$ in steps of $10^\circ$. For the convenience of the readers we create the  geometry of these targets in Figures~\ref{fi5}(a, b, c) so that we can compare with the reconstruction results.

We consider the wave frequency 8 GHz for the data sets (wave number $k$ is about 6.7 which means the wavelength is about 0.93). Since we only have the scattered wave data  on a circle in a far field regime, we use $I_{\text{far}}(z)$ to reconstruct the targets.  We compute $I_{\text{far}}(z)$ at 64 $\times$ 64 sampling points in the search domain $(-2.5, 2.5)^2$.  There is no need for any regularization  or any further processing for the experimental data.  In Figures~\ref{fi5} we can see that the indicator function $I_{\text{far}}(z)$ is able to reconstruct the targets with reasonable accuracy.

\begin{figure}[ht!]
\centering
\subfloat[True geometry]{\includegraphics[width=4.5cm]{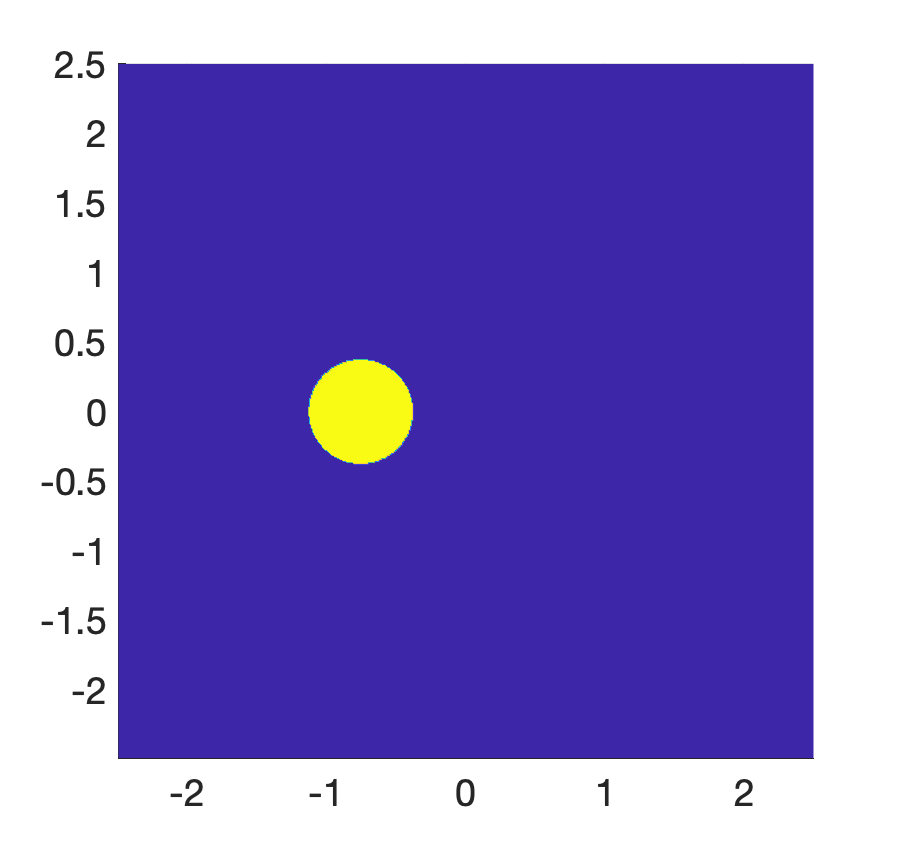}} \hspace{0.5cm}
\subfloat[True geometry]{\includegraphics[width=4.5cm]{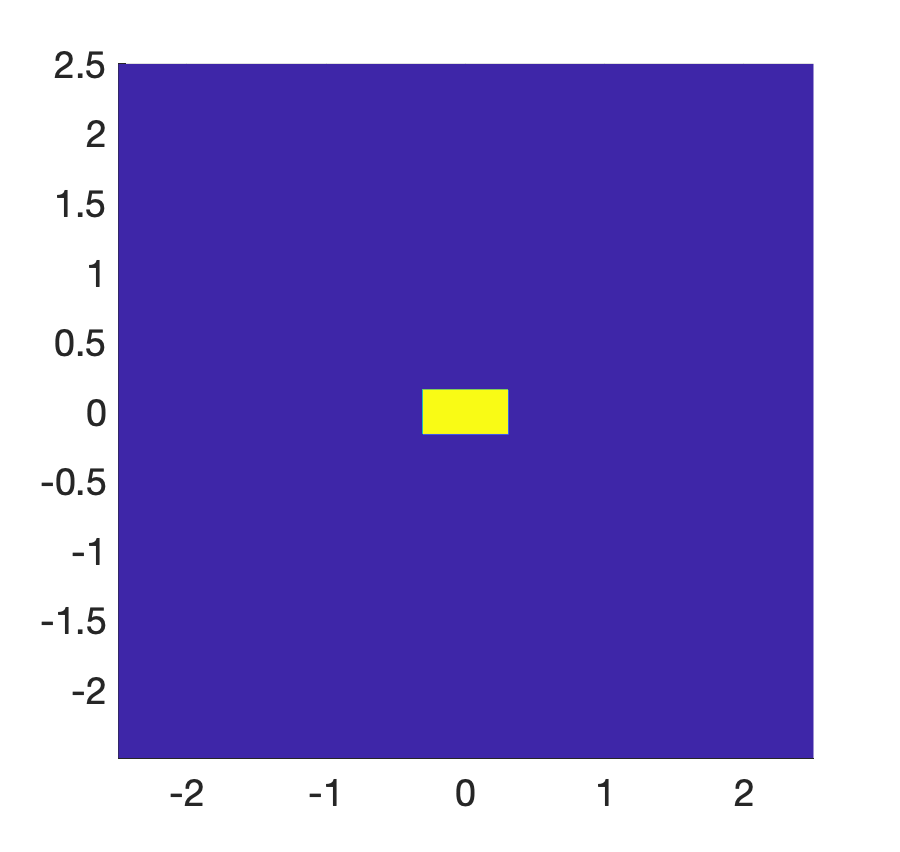}} \hspace{0.5cm}
\subfloat[True geometry]{\includegraphics[width=4.5cm]{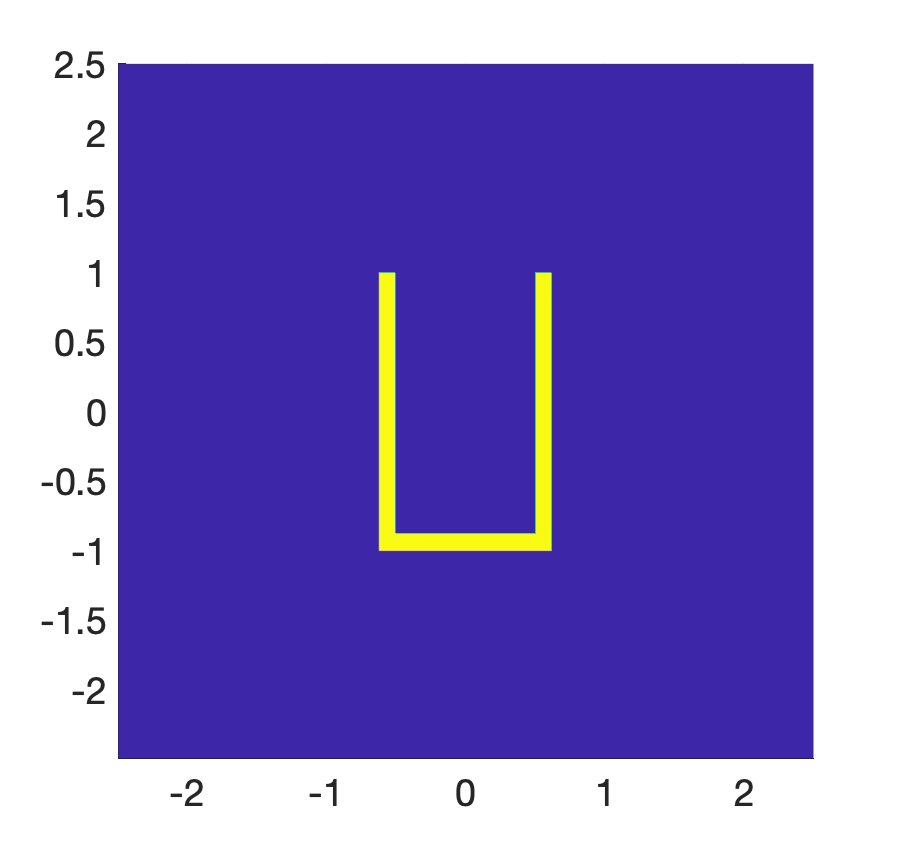}} \hspace{0.5cm} \\
\subfloat[]{\includegraphics[width=4.8cm]{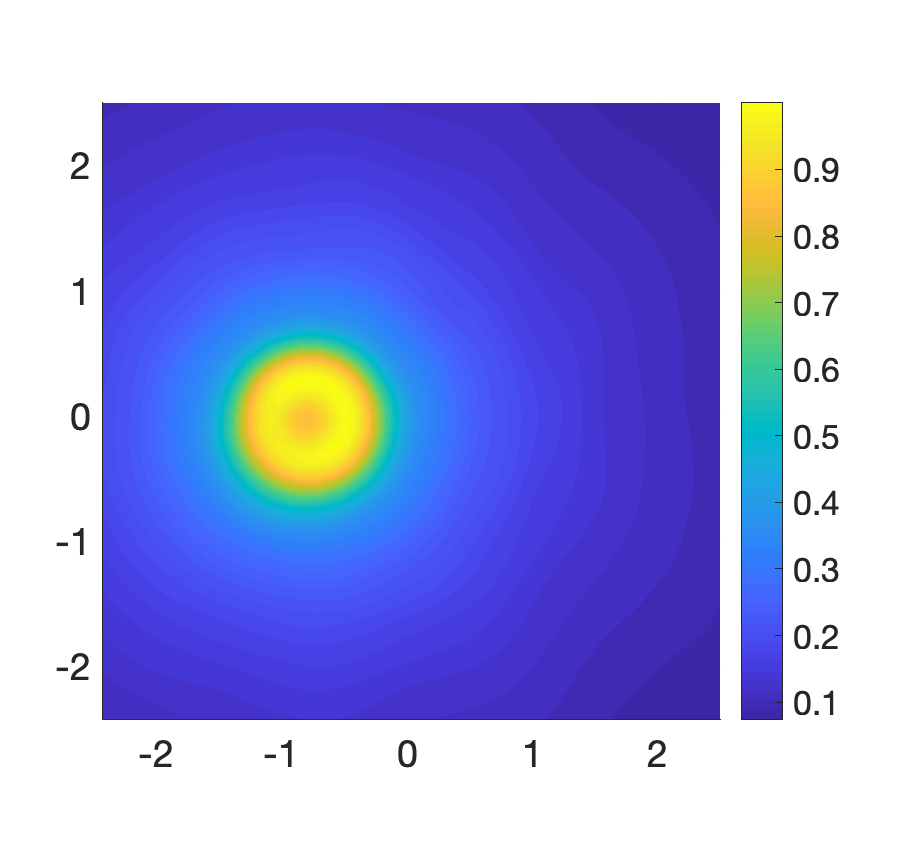}} \hspace{0.3cm}
\subfloat[]{\includegraphics[width=4.8cm]{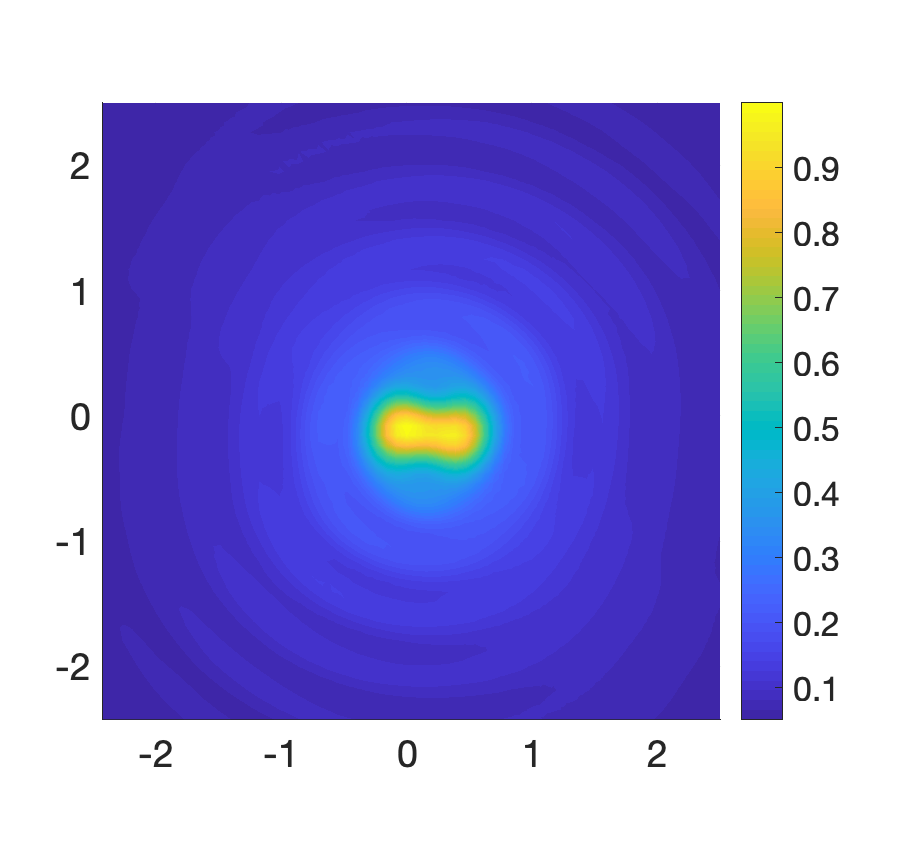}} \hspace{0.3cm} 
\subfloat[]{\includegraphics[width=4.8cm]{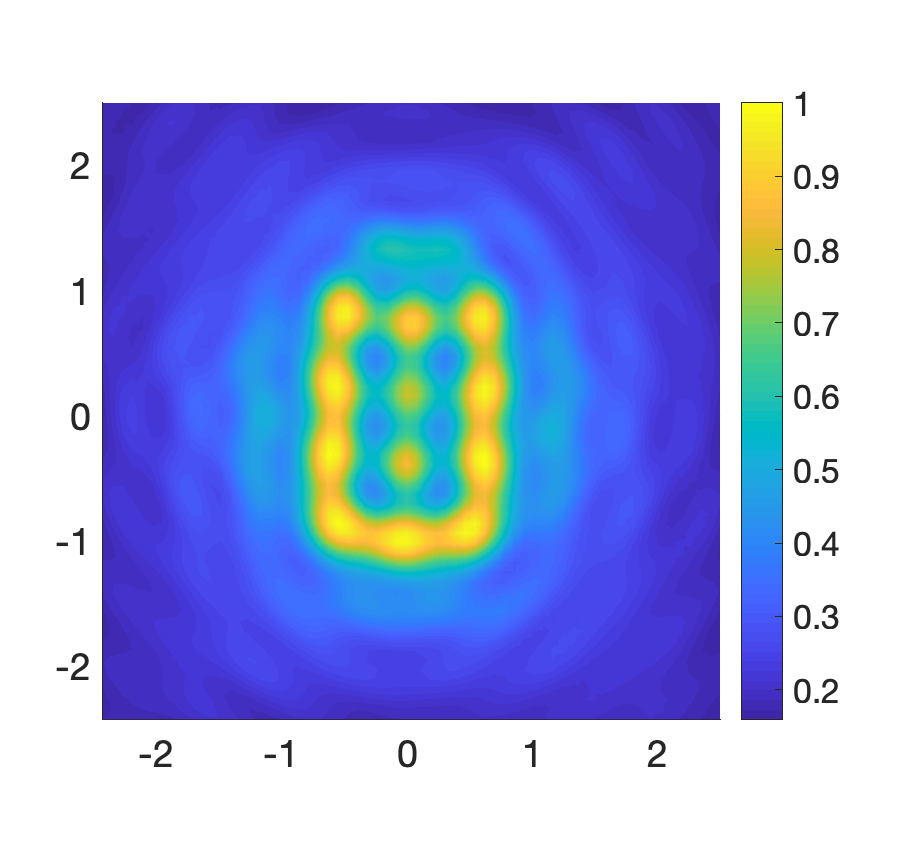}}  \hspace{-0.3cm} 
\caption{Reconstruction with experimental  data from the Fresnel Institute using $I_{\text{far}}(z)$.
 } 
 \label{fi5}
\end{figure}

%

\vspace{0.5cm}
\textbf{Acknowledgement}. The work of the authors  was partially supported by NSF grant DMS-2208293.

\bibliographystyle{plain}
\bibliography{ip-biblio}

\end{document}